\newcommand{\NN}{{\mathbb N}}
\newcommand{\ZZ}{{\mathbb Z}}
\newcommand{\QQ}{{\mathbb Q}}
\renewcommand{\geq}{\geqslant}
\def\B{{\mathcal B}}
\def\fn{{f_{/n}}}
\numberwithin{equation}{section}
\newtheorem{theo}{Theorem}
\newtheorem{prop}[theo]{Proposition}
\newtheorem{coro}[theo]{Corollary}
\newtheorem{lem}[theo]{Lemma}
\theoremstyle{definition}
\newtheorem{defi}[theo]{Definition}
\newtheorem{rema}[theo]{Remark}
\begin{document}

\title{Minimal polynomial dynamics on the set of 3-adic integers}
\author{Fabien Durand}
\author{Fr\'ed\'eric Paccaut}
\address{
Universit\'e de Picardie Jules Verne\newline
Laboratoire Ami\'enois de Math\'ematiques Fondamentales et Appliqu\'ees\newline
CNRS-UMR 6140\newline
33 rue Saint Leu\newline
80039 Amiens Cedex 01\newline
France.}
\email{fabien.durand@u-picardie.fr}
\email{frederic.paccaut@u-picardie.fr}

\begin{abstract}
In this paper are characterized the polynomials, in terms of their coefficients, that have all their orbits dense in the set of 3-adic integers $\ZZ_3$.
\end{abstract}

\maketitle
\footnote{{\it 2000 Mathematics Subject Classification}: Primary 37E99, Secondary 11S85}

\section{Introduction}

The study of dynamical systems on $\ZZ_p$ or $\QQ_p$ is now developing a lot \cite{An1,BS,CP,FLWZ,FLYZ}. It is sometimes guided by or leading to 
applications in physics, cognitive science and cryptography \cite{An2,KN}.
For example, in the theory of pseudo random number generators, it is usefull to have a map, defined on the
integers, giving rise to large cycles
modulo $n$ for a given integer $n$. When $n$ is a power of a prime, good candidates are minimal maps in the set of $p$-adic integers (that is maps with all
their orbits
dense). Namely, these minimal maps, whenever polynomial, have only one cycle with maximal length modulo $p^n$ for every $n$. The point is:
how to find such a minimal map or
decide if a given polynomial map is minimal ?

There is a complete characterization of minimal polynomial maps (with a different vocabulary) for $p=2$ in \cite{La} and necessary conditions on the Mahler coefficients
of 1-Lipschitz maps to be minimal are given in \cite{An1}. Complete results also exist for 
affine maps (see \cite{CP} and \cite{FLYZ} where the dynamics are also studied when non minimal) and for polynomials of degree 2 for all prime $p$ \cite{Kn}.
But a complete description of minimal polynomials in $\ZZ_p$
in terms of their coefficients seems to be a much harder task.
Here we give this description for $p=3$.

First, we give definitions concerning $p$-adic analysis and dynamical systems. In Section 3, we develop the main properties of the dynamics of a compatible map
(a map that preserves congruences) on the set of $p$-adic integers. In particular, we prove that an onto compatible map is minimal if and only if it is conjugated to
the odometer in base $p$. Section 4 is devoted to the characterization of minimality for polynomials. In this section, we have reorganized ideas of \cite{DZ} and
rewritten a proof from \cite{La} so as to develop a strategy to deal with $\ZZ_2$ and $\ZZ_3$. We hope this strategy will also apply to $\ZZ_p$ for $p>3$.

\section{Some definitions}

\subsection{$p$-adic integers and compatible maps}
Let $p$ be a prime number.
We endow $\ZZ / p^n \ZZ$ with the discrete topology and $\prod_{n\in \NN} \ZZ/p^{n} \ZZ$ with the product topology.
Let $\varphi_n : \ZZ/p^{n+1} \ZZ \to  \ZZ/p^{n} \ZZ$, $n\in \NN$, be the canonical homomorphisms.
We call $\ZZ_p$ the projective limit of $(\ZZ/p^{n} \ZZ , \varphi_n )$.
This limit can be seen as

$$
\ZZ_p = \left\{ (x_n)_n \in \prod_{n\in \NN} \ZZ/p^{n} \ZZ  ; \varphi_n (x_{n+1}) = x_n \right \}
$$

which is a compact subset of $\prod_{n\in \NN} \ZZ/p^{n} \ZZ$ (see \cite{Ro}).
The canonical projections are $\pi_n : \ZZ_p \to \ZZ/p^{n} \ZZ$.
We of course have $\varphi_{n} \circ \pi_{n+1} = \pi_{n}$.

Let $x=(x_n)\in \ZZ_p$ and $y=(y_n)\in \ZZ_p$.
Then $(x_n+y_n)_n$ and $(x_n.y_n)_n$ are also  elements of $ \ZZ_p$;
this defines the addition $x+y$ and the multiplication $x.y$ in $ \ZZ_p$.
It can be checked that with these operations $ \ZZ_p$ is a topological ring.

As a topological space $\ZZ_p$, is a Cantor set (a compact metric space with no isolated point and a countable base of its topology consisting of clopen sets).
More precisely, one can prove that it is homeomorphic to $\prod_{n\in \NN} \{ 0,1,\dots , p-1\}$ endowed with the infinite product of the discrete topologies.

Through this homeomorphism, a point $x=(x_n)_n$ in $\ZZ_p$ can be represented by $x=x_0'+x_1'p+x_2'p^2+\ldots$ with $x_i'\in\{0,1,\ldots,p-1\}$ and 
$x_i=x_0'+x_1'p+\ldots+x_i'p^i$. In this way, $\ZZ/p^n\ZZ$ can be viewed as a subset of $\ZZ_p$ :
  $$\ZZ/p^n\ZZ=\{x\in\ZZ_p, \forall i\geq n, x_i'=0\}.$$
Thereofore, when no confusion is possible, we will consider an element $x\in\ZZ/p^n\ZZ$ as an element $x=x_0'+x_1'p+x_2'p^2+\ldots+x_i'p^i$ of $\ZZ_p$ with $i\le n-1$.
By definition, $\vert x\vert_p=p^{-\min\{i,x_i'\neq 0\}}$ is the $p$-adic norm of $x$ (with $\vert 0\vert_p=0$). Each set $x+p^k\ZZ_p$ ($x\in\ZZ_p, k\in{\NN}$) is
a clopen ball of radius $p^{-k}$ and for each $k$, $\ZZ_p$ is the union of $p^k$ balls of radius $p^{-k}$.

The Haar measure, the unique translation invariant probability measure on $\ZZ_p$, gives measure $p^{-k}$ to any ball of radius $p^{-k}$. This measure will
be denoted by $\mu_p$.

The following lemma will be extensively used in this paper. 
The proof is left to the reader.

\begin{lem}
\label{compatible}
Let $f:\ZZ_p \to \ZZ_p$ be a function.
Then, the following are equivalent.

\begin{enumerate}
\item
For every $n$, if $\pi_n (x) = \pi_n (y) $ then  $\pi_n (f(x)) = \pi_n (f(y))$;
\item
For every $x$ and $y$ in ${\ZZ}_p$, $\vert f(y)-f(x)\vert_p \le\vert y-x\vert_p$;
\item
For every $n$ and all $x\in \ZZ_p$, $f(x+p^n \ZZ_p ) \subset f(x) + p^n \ZZ_p $;
\item
For every $n$, there exists a unique map $\fn : \ZZ/p^n \ZZ \to \ZZ /p^n \ZZ$ satisfying

\begin{equation}
\pi_n \circ f = \fn \circ \pi_n .
\end{equation}
\end{enumerate}
Moreover we have the following other commuting relation:

\begin{equation}
\label{proj-fn}
\fn \circ \varphi_{n} = \varphi_{n} \circ f_{/n+1} .
\end{equation}
\end{lem}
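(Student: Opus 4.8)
The plan is to establish all the equivalences by pivoting on condition $(1)$, proving $(1)\Leftrightarrow(2)$, $(1)\Leftrightarrow(3)$ and $(1)\Leftrightarrow(4)$ directly, and then to deduce \eqref{proj-fn} from $(4)$. Throughout I would use the dictionary between the projections $\pi_n$ and the $p$-adic norm. The one elementary fact I would record at the outset is that, for $x,y\in\ZZ_p$, one has $\pi_n(x)=\pi_n(y)$ if and only if $x-y\in p^n\ZZ_p$, i.e.\ if and only if $|y-x|_p\le p^{-n}$; equivalently, the fiber of $\pi_n$ over $\pi_n(x)$ is exactly the ball $x+p^n\ZZ_p$. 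I would also note that each $\pi_n$ maps onto $\ZZ/p^n\ZZ$ (it is the canonical projection of a projective limit whose connecting maps $\varphi_n$ are themselves surjective), since this surjectivity is what makes the maps $\fn$ well defined and unique.

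Granting this, $(1)\Leftrightarrow(3)$ is just a rephrasing: $(1)$ says that if $y$ lies in the fiber of $\pi_n$ through $x$ then $f(y)$ lies in the fiber of $\pi_n$ through $f(x)$, and rewriting fibers as balls turns this into $f(x+p^n\ZZ_p)\subset f(x)+p^n\ZZ_p$ for all $x$ and $n$. For $(1)\Leftrightarrow(2)$ I would exploit that the $p$-adic norm takes values in the discrete set $\{0\}\cup\{p^{-n}:n\ge 0\}$: if $(2)$ holds and $\pi_n(x)=\pi_n(y)$, then $|f(y)-f(x)|_p\le|y-x|_p\le p^{-n}$, so $\pi_n(f(x))=\pi_n(f(y))$; conversely, given $(1)$ and $x\ne y$, put $p^{-n}=|y-x|_p$, so that $\pi_n(x)=\pi_n(y)$, whence $\pi_n(f(x))=\pi_n(f(y))$ and therefore $|f(y)-f(x)|_p\le p^{-n}=|y-x|_p$ (the case $x=y$ being trivial).

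For $(1)\Leftrightarrow(4)$: assuming $(1)$, I define $\fn$ by $\fn(a)=\pi_n(f(x))$ for any $x$ with $\pi_n(x)=a$; surjectivity of $\pi_n$ provides such an $x$, and $(1)$ shows the result is independent of its choice, so $\fn$ is well defined and satisfies $\pi_n\circ f=\fn\circ\pi_n$. Uniqueness is forced because $\pi_n$ is onto, so any map $g$ with $g\circ\pi_n=\pi_n\circ f$ must agree with $\fn$ on every element of $\ZZ/p^n\ZZ$. Conversely, if such an $\fn$ exists, then $\pi_n(x)=\pi_n(y)$ gives $\pi_n(f(x))=\fn(\pi_n(x))=\fn(\pi_n(y))=\pi_n(f(y))$, which is $(1)$.

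Finally, to get the commuting relation \eqref{proj-fn}, I would compose with the surjection $\pi_{n+1}$ and use $\varphi_n\circ\pi_{n+1}=\pi_n$ together with the defining relations of $\fn$ and $f_{/n+1}$:
\[
(\fn\circ\varphi_n)\circ\pi_{n+1}=\fn\circ\pi_n=\pi_n\circ f=\varphi_n\circ\pi_{n+1}\circ f=\varphi_n\circ f_{/n+1}\circ\pi_{n+1},
\]
and then cancel $\pi_{n+1}$ on the right, which is legitimate because it is surjective. I do not expect any genuine obstacle: the statement is entirely a translation between congruences modulo $p^n$, cosets of $p^n\ZZ_p$, and the $p$-adic norm, and the only point requiring care is to invoke the surjectivity of the maps $\pi_n$ (and $\varphi_n$) at each step where a map is defined on, or cancelled from, a quotient.
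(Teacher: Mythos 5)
Your proof is correct and complete: the dictionary $\pi_n(x)=\pi_n(y)\Leftrightarrow x-y\in p^n\ZZ_p\Leftrightarrow \vert y-x\vert_p\le p^{-n}$ (using that the norm only takes the values $0$ and $p^{-n}$), the construction and uniqueness of $\fn$ via surjectivity of $\pi_n$, and the cancellation of the surjection $\pi_{n+1}$ to obtain \eqref{proj-fn} are exactly the steps needed. The paper leaves this proof to the reader, so there is no written argument to compare with, but yours is the standard one it has in mind and has no gaps.
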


\begin{diagram}
{\ZZ}_p               &                    &                      &      \rTo^f         &                        &                       & {\ZZ}_p       \\
                      &  \rdTo^{\pi_{n+1}} &                      &                     &                        &    \ldTo^{\pi_{n+1}}  &               \\
\dTo^{\pi_n}          &                    &   {\ZZ}/p^{n+1}{\ZZ} &     \rTo^{f_{/n+1}}  &   {\ZZ}/p^{n+1}{\ZZ}   &                      & \dTo_{\pi_n}  \\
                      &  \ldTo^{\varphi_n} &                      &                     &                        &    \rdTo^{\varphi_n}  &               \\
{\ZZ}/p^n{\ZZ}        &                    &                      &       \rTo^{\fn}    &                        &                       & {\ZZ}/p^n{\ZZ} \\
\end{diagram}

\begin{defi}
A {\it compatible map} $f : \ZZ_p \to \ZZ_p$ is 
a map satisfying one of the assertions of Lemma \ref{compatible}.
\end{defi}

Note that compatible maps are exactly 1-Lipschitz maps. This terminology can be found in \cite{An1}.
In the sequel, whenever $f$ is a compatible map, $\fn$ will always denote the map defined
in Lemma \ref{compatible}.

\begin{prop}
Any polynomial whose coefficients belong to ${\ZZ}_p$ is compatible.
\end{prop}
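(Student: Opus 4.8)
The plan is to verify condition (1) of Lemma~\ref{compatible}, since that is the cleanest algebraic criterion and reduces the claim to an elementary congruence computation. Write $f(X) = \sum_{k=0}^d a_k X^k$ with each $a_k \in \ZZ_p$. Suppose $x, y \in \ZZ_p$ satisfy $\pi_n(x) = \pi_n(y)$; equivalently, $x - y \in p^n \ZZ_p$, i.e. $|x-y|_p \le p^{-n}$. I must show $\pi_n(f(x)) = \pi_n(f(y))$, i.e. $f(x) - f(y) \in p^n \ZZ_p$.

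First I would reduce to monomials: since $f(x) - f(y) = \sum_{k=0}^d a_k (x^k - y^k)$ and $p^n \ZZ_p$ is an ideal of the ring $\ZZ_p$, it suffices to show $x^k - y^k \in p^n \ZZ_p$ for each $k$ (the coefficient $a_k \in \ZZ_p$ then keeps the product in the ideal, and the ideal is closed under the finite sum). For this I would use the standard factorization
\begin{equation*}
x^k - y^k = (x - y)\bigl(x^{k-1} + x^{k-2}y + \cdots + y^{k-1}\bigr),
\end{equation*}
valid in any commutative ring. The second factor lies in $\ZZ_p$, and $x - y \in p^n\ZZ_p$ by hypothesis; since $p^n \ZZ_p$ is an ideal, the product lies in $p^n\ZZ_p$. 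This establishes $f(x) - f(y) \in p^n\ZZ_p$, hence $\pi_n(f(x)) = \pi_n(f(y))$, which is exactly assertion (1) of Lemma~\ref{compatible}. By that lemma, $f$ is compatible.

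There is essentially no obstacle here: the only points to be careful about are that $\ZZ_p$ is genuinely a commutative ring (so the polynomial factorization is legitimate) and that $p^n \ZZ_p$ is an ideal, both of which are recorded in the preliminaries. One could equally argue via assertion (2), estimating $|f(x) - f(y)|_p \le \max_k |a_k|_p\,|x^k - y^k|_p \le \max_k |x-y|_p \cdot |x^{k-1} + \cdots + y^{k-1}|_p \le |x-y|_p$ using $|a_k|_p \le 1$, the ultrametric inequality, and $|x|_p, |y|_p \le 1$; this is the same computation in multiplicative language. I would present the ideal-theoretic version as it is the shortest and makes the role of each hypothesis transparent.
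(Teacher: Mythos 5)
Your proof is correct: the paper states this proposition without any proof, and your verification of assertion (1) of Lemma \ref{compatible} via the factorization $x^k-y^k=(x-y)(x^{k-1}+\cdots+y^{k-1})$ together with the fact that $p^n\ZZ_p$ is an ideal is exactly the standard argument the authors leave implicit. The alternative ultrametric estimate you sketch (assertion (2), using $|a_k|_p\le 1$ and $|x|_p,|y|_p\le 1$) is the same computation in norm language, so there is nothing to add.
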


\subsection{Dynamical systems}

Let $X$ be a compact metric space and $T:X\to X$  be a homeomorphism. The couple $(X,T)$ is called a topological dynamical system. Let $\B$ be the Borel
$\sigma$-algebra of $X$ and $\mu$ a probability measure defined on $\B$. 

A set $A\in\B$ is called invariant by $T$ if $T^{-1}A=A$.
We say that $\mu$ is invariant by $T$ if, for every set $A\in\B$, $\mu(T^{-1}A)=\mu(A)$.
The dynamical stystem $(X,T)$ is called ergodic for $\mu$ if for every invariant set $A\in\B$, $\mu(A)=0$ or $\mu(X\setminus A)=0$.
It is called minimal if $A=\emptyset$ or $A=X$ whenever $A$ is a closed invariant set. It is equivalent to
every orbit being dense in $X$.
It is called uniquely ergodic if it has only one invariant ergodic probability measure.
Two topological dynamical systems $(X,T)$ and $(Y,S)$ are called conjugate if there exists a homeomorphism $\Psi:X\to Y$ such that
$\Psi\circ T=S\circ\Psi$.

\section{Dynamics of compatible maps in ${\ZZ}_p$}

In this section we give dynamical properties of compatible maps.
These are known results but we give the proofs for the sake of completeness and a better understanding of the next section.

\begin{prop}\label{isometry}
Let $f:{\ZZ}_p\to{\ZZ}_p$ be a compatible map. The following are equivalent :
\begin{enumerate}
\item $f$ is onto;
\item $f$ is an isometry;
\item for all $n\in \NN$, $\fn:{\ZZ}/p^n{\ZZ}\to{\ZZ}/p^n{\ZZ}$ (defined in Lemma \ref{compatible}) is bijective;
\item $f$ preserves the Haar measure.
\end{enumerate}
\end{prop}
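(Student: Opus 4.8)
The plan is to prove the cycle of implications $(1)\Rightarrow(3)\Rightarrow(4)\Rightarrow(1)$ together with $(3)\Leftrightarrow(2)$, exploiting the finite approximations $\fn$ supplied by Lemma \ref{compatible} and the commuting square \eqref{proj-fn}. The guiding observation is that everything reduces to the finite quotients $\ZZ/p^n\ZZ$, where ``onto'', ``injective'' and ``bijective'' all coincide because these are finite sets, and where the Haar measure becomes the normalized counting measure.

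First I would show $(1)\Rightarrow(3)$. Assume $f$ is onto. Fix $n$; I claim $\fn$ is onto. Given $\bar y\in\ZZ/p^n\ZZ$, pick $y\in\ZZ_p$ with $\pi_n(y)=\bar y$, then $x\in\ZZ_p$ with $f(x)=y$, so $\fn(\pi_n(x))=\pi_n(f(x))=\pi_n(y)=\bar y$ by the relation $\pi_n\circ f=\fn\circ\pi_n$. Since $\ZZ/p^n\ZZ$ is finite, a surjection of it onto itself is a bijection, giving $(3)$. Conversely, for $(3)\Rightarrow(1)$: given $y=(y_n)_n\in\ZZ_p$, use bijectivity of each $\fn$ to pick the unique $x_n\in\ZZ/p^n\ZZ$ with $\fn(x_n)=y_n$; the compatibility relation \eqref{proj-fn}, $\fn\circ\varphi_n=\varphi_n\circ f_{/n+1}$, together with uniqueness forces $\varphi_n(x_{n+1})=x_n$, so $(x_n)_n$ defines a point $x\in\ZZ_p$, and then $f(x)=y$ since $\pi_n(f(x))=\fn(x_n)=y_n$ for all $n$. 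Thus $(1)\Leftrightarrow(3)$.

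Next, $(3)\Rightarrow(2)$: if each $\fn$ is bijective, then $f$ is injective (if $x\neq y$, some $\pi_n(x)\neq\pi_n(y)$, hence $\pi_n(f(x))=\fn(\pi_n(x))\neq\fn(\pi_n(y))=\pi_n(f(y))$, so $f(x)\neq f(y)$). Combined with compatibility, $|f(y)-f(x)|_p\le|y-x|_p$: to get equality, note that if $|f(y)-f(x)|_p<|y-x|_p$, say $|y-x|_p=p^{-k+1}$ so $\pi_{k-1}(x)\neq\pi_{k-1}(y)$, while $|f(y)-f(x)|_p\le p^{-k}$ would give $\pi_k(f(x))=\pi_k(f(y))$, hence $\fk(\pi_{k-1}(x))$ and $\fk(\pi_{k-1}(y))$ agree after applying $\varphi_{k-1}$ — more cleanly, apply injectivity of $f_{/k-1}$ to the contrapositive. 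I would phrase it as: $\pi_n(f(x))=\pi_n(f(y))\iff \fn(\pi_n(x))=\fn(\pi_n(y))\iff\pi_n(x)=\pi_n(y)$ for all $n$, which says exactly $|f(y)-f(x)|_p=|y-x|_p$. This same chain of equivalences actually gives $(2)\Rightarrow(3)$ as well, so $(2)\Leftrightarrow(3)$.

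Finally, the measure-theoretic equivalence. For $(3)\Rightarrow(4)$: the balls $x+p^n\ZZ_p$, i.e. the fibers $\pi_n^{-1}(\{\bar x\})$, generate $\B$, so it suffices to check $\mu_p(f^{-1}(x+p^n\ZZ_p))=\mu_p(x+p^n\ZZ_p)=p^{-n}$. Now $f^{-1}(\pi_n^{-1}(\{\bar x\}))=\pi_n^{-1}(\fn^{-1}(\{\bar x\}))$, which by bijectivity of $\fn$ is a single ball $\pi_n^{-1}(\{\bar z\})$, of measure $p^{-n}$ — so $f$ preserves $\mu_p$. For $(4)\Rightarrow(1)$: if $f$ preserves $\mu_p$ then $\mu_p(f(\ZZ_p))\ge\mu_p(\ZZ_p)=1$ — more carefully, $f(\ZZ_p)$ is compact hence closed; if it were proper it would miss some ball $B$ of positive measure, and then $f^{-1}(B)=\emptyset$ has measure $0\neq\mu_p(B)$, contradicting invariance. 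Hence $f$ is onto. This closes the loop. The only mildly delicate point is the norm-equality argument in $(2)\Leftrightarrow(3)$, where one must be careful translating between $|y-x|_p\le p^{-n}$ and $\pi_n(x)=\pi_n(y)$ (off-by-one in the exponent), but this is routine once the dictionary $\{|x|_p\le p^{-n}\}=\{x\in p^n\ZZ_p\}=\{\pi_n(x)=0\}$ is written down explicitly.
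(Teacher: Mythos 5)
Your proof is correct, but it is organized differently from the paper's. The paper proves the single cycle $(1)\Rightarrow(2)\Rightarrow(3)\Rightarrow(4)\Rightarrow(1)$, and its one genuinely geometric step is $(1)\Rightarrow(2)$: a pigeonhole/covering argument showing that if some ball $x+p^n\ZZ_p$ were not mapped \emph{onto} $f(x)+p^n\ZZ_p$, then one of the $p^n$ balls of radius $p^{-n}$ would be missing from $f(\ZZ_p)$, contradicting surjectivity. You instead make (3) the hub: $(1)\Rightarrow(3)$ by lifting and projecting (surjectivity of each $\fn$, then finiteness), $(3)\Leftrightarrow(2)$ via the dictionary $|x-y|_p\le p^{-n}\iff\pi_n(x)=\pi_n(y)$ combined with $\pi_n\circ f=\fn\circ\pi_n$, and a direct projective-limit proof of $(3)\Rightarrow(1)$ (constructing the preimage coordinatewise, with injectivity of $\fn$ and relation \eqref{proj-fn} giving the coherence $\varphi_n(x_{n+1})=x_n$) which the paper does not contain -- it only recovers surjectivity at the end through $(4)\Rightarrow(1)$ via compactness and the measure. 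Your $(3)\Rightarrow(4)$ is the paper's computation in the form $f^{-1}\bigl(\pi_n^{-1}(\bar x)\bigr)=\pi_n^{-1}\bigl(\fn^{-1}(\bar x)\bigr)$, and your $(4)\Rightarrow(1)$ streamlines the paper's sequence-extraction by noting that $f(\ZZ_p)$ is compact, hence closed, so a missing point yields a missing ball $B$ with $f^{-1}(B)=\emptyset$. What your arrangement buys is that the isometry property becomes a corollary of a purely algebraic statement about the finite quotients, and ontoness is obtained constructively; the paper's route buys the explicit picture that onto compatible maps send balls onto balls. Two small points to tidy: in $(3)\Rightarrow(4)$ you should say explicitly that equality of $\mu_p\circ f^{-1}$ and $\mu_p$ on balls extends to all Borel sets because balls form a $\pi$-system generating the Borel $\sigma$-algebra; and in $(2)\Leftrightarrow(3)$ the intermediate index-chasing sentence is muddled -- discard it and keep only your clean chain of equivalences, which is the correct argument.
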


\begin{proof}
{\bf $(1)\Longrightarrow (2)$} :
Let $f$ be onto. 
Let us show that for all $x\in \ZZ_p$ and all $n\in \NN$, $f(x+p^n{\ZZ}_p)=f(x)+p^n{\ZZ}_p$ (we already have
one inclusion because of the compatibility of $f$). 
By contradiction, suppose it is not true : 
there exist 
$x\in \ZZ_p$, $n\in \NN$, and $z\in \ZZ_p$ with $z\in f(x)+p^n{\ZZ}_p\setminus f(x+p^n{\ZZ}_p)$. 
Then $f(x)+p^n{\ZZ}_p=z+p^n{\ZZ}_p$. 
As $f$ is onto,
there exists $y\in \ZZ_p$ with $z=f(y)$ and $y\notin x+p^n{\ZZ}_p$.
This implies $(x+p^n{\ZZ}_p)\cap(y+p^n{\ZZ}_p)=\emptyset$. 
We can write

$$
{\ZZ}_p=(x+p^n{\ZZ}_p)\cup(y+p^n{\ZZ}_p)\cup\bigcup_{i\in\{0,\ldots,p^n-1\}, i\neq x, i\neq y}(i+p^n{\ZZ}_p)
$$

and, using the compatibility of $f$,

$$
f({\ZZ}_p)={\ZZ}_p\subset(z+p^n{\ZZ}_p)\cup\bigcup_{i\in\{0,\ldots,p^n-1\}, i\neq x, i\neq y}(f(i)+p^n{\ZZ}_p)
$$

which means that at least one ball of radius $p^{-n}$ is lacking in ${\ZZ}_p$.

{\bf $(2) \Longrightarrow (3)$} :
As $\ZZ/p^n \ZZ$ is finite, it suffices to show that $\fn$ is one-to-one.
Let $a,b$ be two different elements of $\ZZ /p^n \ZZ$ such that $\fn (a) =
\fn (b)$.
We can consider $a$ and $b$ as elements of $\ZZ_p$.
This implies that $f(a)-f(b) \in p^n \ZZ_p$.
Consequently, $\frac{1}{p^{n-1}} \leq \vert a-b \vert_p = \vert f(a) -f(b)
\vert_p \leq \frac{1}{p^n} $, a contradiction.
Hence $\fn$ is bijective.

{\bf $(3)\Longrightarrow (4)$} : Let us prove that for all $n\in{\NN}$, any $i\in{\ZZ}/p^n{\ZZ}$, we have
  $$f^{-1}(i+p^n{\ZZ}_p)=\fn^{-1}(i)+p^n{\ZZ}_p$$
where $\fn^{-1}(i)$ is reduced to one point as $\fn$ is bijective.
This will do the job as this will imply
 $$\mu_p(f^{-1}(i+p^n{\ZZ}_p))=\mu_p(\fn^{-1}(i)+p^n{\ZZ}_p)=\frac{1}{p^n}=\mu_p(i+p^n{\ZZ}_p)$$
First inclusion : using compatibility, $f(\fn^{-1}(i)+p^n{\ZZ}_p)\subset f(\fn^{-1}(i))+p^n{\ZZ}_p=i+p^n{\ZZ}_p$ and so
$\fn^{-1}(i)+p^n{\ZZ}_p\subset f^{-1}(f(\fn^{-1}(i)+p^n{\ZZ}_p))\subset f^{-1}(i+p^n{\ZZ}_p)$.

For the other inclusion, let $x\in f^{-1}(i+p^n{\ZZ}_p)$. We have $f(x)\in i+p^n{\ZZ}_p$. 
Let $x_n=\pi_n(x)$. Using compatibility, we have
$f(x)\in f(x_n+p^n{\ZZ}_p)\subset \fn(x_n)+p^n{\ZZ}_p$. 
Hence $f(x)$ is a point common to the two balls with the same radius
$\fn(x_n)+p^n{\ZZ}_p$ and $i+p^n{\ZZ}_p$. 
This implies that both balls have the same center, so $i=\fn(x_n)$ and $x\in
\fn^{-1}(i)+p^n{\ZZ}_p$.

{\bf $(4)\Longrightarrow (1)$} :
By contradiction, suppose that there exists $x\in{\ZZ}_p\setminus f({\ZZ}_p)$.
Let us show there exists $n$ such that
$x+p^n{\ZZ}_p\subset{\ZZ}_p\setminus f({\ZZ}_p)$. 
If it is not the case, for all $n\in \NN$ there exists $x_n\in(x+p^n{\ZZ}_p)\cap f({\ZZ}_p)$ and $y_n$ with $x_n=f(y_n)$ and
$\vert f(y_n)-x\vert_p\le p^{-n}$.
By compacity one can suppose that $(y_n)$ converges to some $y\in \ZZ_p$ and, thus, that
$x=f(y)$. 
This contradicts our assumption.
 
To end remark that $\mu_p(f({\ZZ}_p)) = \mu_p(f^{-1}(f({\ZZ}_p)))=\mu_p({\ZZ}_p)=1$
which is absurd because a ball $x+p^n{\ZZ}_p$ (which has a measure $p^{-n}$) is lacking in $f({\ZZ}_p)$.
\end{proof}

Now we deal with minimality of maps in ${\ZZ}_p$.
Let $T$ be the translation in ${\ZZ}_p$:
  $$\begin{array}{crcl}
     T: & {\ZZ}_p & \rightarrow & {\ZZ}_p \\
      & x & \mapsto & x+1
    \end{array}
$$

It is sometimes called odometers in base $(p^n)_n$ (\cite{Ku}).

\begin{lem}\label{minimality}
Let $n\in{\NN}$ and $g:{\ZZ}/n{\ZZ}\rightarrow{\ZZ}/n{\ZZ}$. Then $g$ is minimal if and only if:
  $$\left\{\begin{array}{l}
  \forall x\in{\ZZ}/n{\ZZ}, g^n(x)=x , \hbox{ and },\\
  \forall x\in{\ZZ}/n{\ZZ}, g^k(x)=x\Longrightarrow k\equiv 0\ [n] .
  \end{array}\right.$$
\end{lem}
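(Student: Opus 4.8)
The plan is to recall that a map $g$ on the finite set $\ZZ/n\ZZ$ is minimal precisely when it is a single $n$-cycle, and then to verify that the displayed two conditions are exactly a reformulation of that fact. First I would observe that since $\ZZ/n\ZZ$ is finite, minimality of $g$ (every orbit dense) is equivalent to every orbit being all of $\ZZ/n\ZZ$, which in particular forces $g$ to be a bijection: if $g$ were not onto, the point missing from the image would have an orbit of size one under nothing mapping to it, or more directly, the orbit of any point would miss that point. So a minimal $g$ is a permutation of an $n$-element set all of whose orbits (i.e.\ cycles) have length $n$, hence it is an $n$-cycle; conversely an $n$-cycle is clearly minimal.

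Next I would show the two displayed conditions together are equivalent to ``$g$ is an $n$-cycle.'' For the forward direction, assume $g$ is an $n$-cycle. Pick any $x$; then $x, g(x), \ldots, g^{n-1}(x)$ are the $n$ distinct elements of $\ZZ/n\ZZ$, so $g^n(x)=x$, giving the first condition. For the second condition, if $g^k(x)=x$ with $0\le k<n$ say (we may reduce $k$ mod $n$ once we know $g^n=\mathrm{id}$), then $x,g(x),\dots,g^{k-1}(x)$ would be a $g$-invariant set of size dividing... more carefully: the orbit of $x$ has size equal to the least positive $k$ with $g^k(x)=x$, and that size is $n$ since $g$ is an $n$-cycle, so the least such $k$ is $n$, whence any $k$ with $g^k(x)=x$ is a multiple of $n$.

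For the converse direction, assume both displayed conditions. The first condition $g^n=\mathrm{id}$ shows $g$ is a bijection (with inverse $g^{n-1}$), so it is a permutation and decomposes into disjoint cycles. Fix any $x$ and let $\ell$ be the length of the cycle containing $x$, i.e.\ the least positive integer with $g^\ell(x)=x$; then $\ell\le n$ and, by the second condition applied with $k=\ell$, we get $\ell\equiv 0\ [n]$, forcing $\ell=n$. Thus every cycle has length $n$, and since the cycles are disjoint and cover the $n$-element set $\ZZ/n\ZZ$, there is exactly one cycle: $g$ is an $n$-cycle, hence minimal. I do not expect any real obstacle here; the only point requiring a little care is the bookkeeping around ``least positive $k$'' versus ``any $k$'', and making sure the orbit-size argument is phrased so that it genuinely uses both conditions (existence of a common return time $n$, and that $n$ is the minimal one at every point).
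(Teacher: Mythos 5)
Your proof is correct. The paper itself gives no argument here (the proof is ``easy and left to the reader''), and what you write is exactly the intended routine verification: minimality on a finite set means every forward orbit is the whole set, which is equivalent to $g$ being a single cycle of length $n$, and the two displayed conditions are precisely the statement that every point has $n$ as a return time and that $n$ divides every return time. The only imprecise clause is the parenthetical claim that if $g$ were not onto ``the orbit of any point would miss that point'' (the orbit of the omitted point contains it; one should say the orbit of any \emph{other} point misses it, or note that density of the orbit of $g(x)$ already forces $x\in\mathrm{im}(g)$), but this remark is inessential: in the direction where bijectivity is actually needed you correctly derive it from $g^n=\mathrm{id}$, so the argument stands as written.
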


\begin{proof}
The proof is easy and left to the reader.
\end{proof}

A {\it cycle} for a map $h : \ZZ /p^n \ZZ \to \ZZ /p^n \ZZ$ is a finite sequence $(x_k)_{0\leq k\leq K-1}$
such that for every $i\in\{0,\ldots,K-2\}$, $x_{i+1} = h (x_i)$, $h(x_{K-1})=x_0$ and $\# \{ x_k ; 0\leq k \leq K-1 \} = K$.
It is a {\it full-cycle} for $h$ if $K=p^n$.

\begin{theo}\label{equivalences}
Let $f:{\ZZ}_p\to{\ZZ}_p$ be an onto compatible map. 
The five following propositions are equivalent:
\begin{enumerate}
\item $f$ is minimal;
\item for all $n$, $\fn$ has a full-cycle in ${\ZZ}/p^n{\ZZ}$;
\item $f$ is conjugate to the translation $T(x)=x+1$ on $\ZZ_p$;
\item $f$ is uniquely ergodic;
\item $f$ is ergodic for the Haar measure.
\end{enumerate}
\end{theo}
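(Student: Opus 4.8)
The plan is to prove the chain of implications $(1)\Leftrightarrow(2)$, $(2)\Rightarrow(3)$, $(3)\Rightarrow(1)$, $(3)\Rightarrow(4)$, $(4)\Rightarrow(5)$ and $(5)\Rightarrow(2)$, which together give all the equivalences. Throughout one uses that $f$, being onto, is by Proposition~\ref{isometry} a homeomorphism of $\ZZ_p$ preserving $\mu_p$ with each $\fn$ bijective, and that $\pi_n\circ f^k=\fn^k\circ\pi_n$ for every $k$.

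For $(1)\Leftrightarrow(2)$: the $\pi_n$-image of the orbit of a point $x$ is the $\fn$-orbit of $\pi_n(x)$ in the finite discrete space $\ZZ/p^n\ZZ$, which is dense there iff it is all of $\ZZ/p^n\ZZ$; since $\fn$ is a bijection, this happens for one $x$ iff it happens for every $x$ iff $\fn$ reduces to a single $p^n$-cycle, that is, has a full-cycle. As the balls $a+p^n\ZZ_p$ form a basis of the topology of $\ZZ_p$, an orbit is dense in $\ZZ_p$ iff its image is dense in every $\ZZ/p^n\ZZ$, and both implications follow.

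The implication $(2)\Rightarrow(3)$ is the \emph{heart of the matter}. Fix $z\in\ZZ_p$ and set $z_n=\pi_n(z)$, so that $\varphi_n(z_{n+1})=z_n$. Since $\fn$ has order $p^n$, every $y\in\ZZ/p^n\ZZ$ equals $\fn^k(z_n)$ for a unique $k\in\{0,\dots,p^n-1\}$; define $\Psi_n:\ZZ/p^n\ZZ\to\ZZ/p^n\ZZ$ by $\Psi_n(\fn^k(z_n))=k$. Then $\Psi_n$ is a bijection and conjugates $\fn$ to $a\mapsto a+1$ on $\ZZ/p^n\ZZ$ (exponents read mod $p^n$). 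The delicate point — the main obstacle — is that the $\Psi_n$ are compatible with the $\varphi_n$, i.e. $\varphi_n\circ\Psi_{n+1}=\Psi_n\circ\varphi_n$: writing an element of $\ZZ/p^{n+1}\ZZ$ as $f_{/n+1}^k(z_{n+1})$ and iterating the relation $\varphi_n\circ f_{/n+1}=\fn\circ\varphi_n$ of \eqref{proj-fn} together with $\varphi_n(z_{n+1})=z_n$, both sides are computed to be $k\bmod p^n$. Compatibility lets one assemble the $\Psi_n$ into a map $\Psi:\ZZ_p\to\ZZ_p$, $\Psi((x_n)_n)=(\Psi_n(x_n))_n$, which is continuous and bijective (the inverses $\Psi_n^{-1}$ are compatible as well), hence a homeomorphism of the compact space $\ZZ_p$; and $\Psi\circ f=T\circ\Psi$ since this identity holds after applying every $\pi_n$, $T$ inducing $a\mapsto a+1$ on $\ZZ/p^n\ZZ$.

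Finally: $(3)\Rightarrow(1)$ because $T$ is minimal (the orbit $\{T^k(0)\}_{k\in\ZZ}=\ZZ$ is dense in $\ZZ_p$, every ball containing an integer) and minimality is a conjugacy invariant; $(3)\Rightarrow(4)$ because $T$ cyclically permutes the $p^n$ balls of radius $p^{-n}$, so every $T$-invariant probability measure assigns mass $p^{-n}$ to each of them, forcing it to agree with $\mu_p$ on the generating $\pi$-system of all such balls and hence to equal $\mu_p$, so $T$ — and thus $f$, by conjugacy — is uniquely ergodic; $(4)\Rightarrow(5)$ because a uniquely ergodic system has a unique invariant probability measure, which is then ergodic, and since $\mu_p$ is $f$-invariant it must be this measure; and $(5)\Rightarrow(2)$ because if some $\fn$ were not a full-cycle, $\ZZ/p^n\ZZ$ would be the disjoint union of at least two $\fn$-cycles $C_1,\dots,C_r$, and then $\pi_n^{-1}(C_1)$ would be a Borel set with $f^{-1}(\pi_n^{-1}(C_1))=\pi_n^{-1}(\fn^{-1}(C_1))=\pi_n^{-1}(C_1)$, i.e. $f$-invariant, of measure $\#C_1/p^n\in(0,1)$, contradicting ergodicity for $\mu_p$.
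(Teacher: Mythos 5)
Your proof is correct, and its heart coincides with the paper's: the implication $(2)\Rightarrow(3)$ is established by exactly the same device, namely defining $\Psi_n(f_{/n}^k(z_n))=k$ level by level, checking compatibility with the $\varphi_n$ via the relation \eqref{proj-fn} together with the fact that $f_{/n}$ has order $p^n$ (which the paper packages as Lemma \ref{minimality}, with base point $0$ instead of a general $z$), and assembling the $\Psi_n$ into a conjugacy on the inverse limit. Where you differ is in how the easier implications are routed. The paper proves the single cycle $(1)\Rightarrow(2)\Rightarrow(3)\Rightarrow(4)\Rightarrow(5)\Rightarrow(1)$: its $(5)\Rightarrow(1)$ uses ergodicity to produce $k$ with $\mu_p\bigl(f^k(x+p^n\ZZ_p)\cap(y+p^n\ZZ_p)\bigr)>0$ and deduces density of orbits, and its $(3)\Rightarrow(4)$ is dispatched with the phrase ``by definition of the Haar measure''. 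You instead prove $(1)\Leftrightarrow(2)$ directly (projections of orbits are $f_{/n}$-orbits in a finite set), close the loop with $(5)\Rightarrow(2)$ by exhibiting, when some $f_{/n}$ is not a full cycle, the set $\pi_n^{-1}(C_1)$ as an $f$-invariant Borel set of measure strictly between $0$ and $1$, and you spell out the unique ergodicity of the odometer for $(3)\Rightarrow(4)$. Your scheme is slightly more elementary on the measure-theoretic side (no positive-measure recurrence argument) and does cover all five equivalences. One small point to tighten: in $(3)\Rightarrow(1)$ you justify minimality of $T$ only by density of the orbit of $0$, whereas minimality requires every orbit to be dense; this is immediate here since the forward orbit of any $x$ is $x+\NN$ and every ball $a+p^n\ZZ_p$ contains points of $x+\NN$, so the claim is true, but the parenthetical as written only treats $x=0$.
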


\begin{proof}
{\bf $(1)\Longrightarrow (2)$}: First note that as $f$ is onto, for every $n$, $\fn$ is bijective. By contradiction, suppose that
$\fn$ does not have a full-cycle in ${\ZZ}/p^n{\ZZ}$ for some $n$. Then, as $\fn$ is bijective, it has several cycles and there exists a proper subset $A$ of
${\ZZ}/p^n{\ZZ}$ such that $\fn(A)=A$. Then $A+p^n\ZZ_p$ is a non empty closed invariant proper subset of $\ZZ_p$ and $f$ is not minimal.

{\bf $(2)\Longrightarrow (3)$}: We need to find a conjugacy $\Psi:{\ZZ}_p\rightarrow{\ZZ}_p$
such that
  $$
\Psi\circ f=T\circ\Psi=\Psi+1 .
$$
Actually, we will look for a compatible conjugacy $\Psi$. 
Hence we only need to define maps $\psi_n:{\ZZ}/p^n{\ZZ}\rightarrow{\ZZ}/p^n{\ZZ}$ such
that $\psi_n\circ\varphi_{n}=\varphi_{n}\circ\psi_{n+1}$ and $\psi_n \circ \fn = T_{/n} \circ \psi_n$.
Indeed, defining $\Psi_n=\pi_n^{-1}\circ\psi_n\circ\pi_n$ for any $n$, one can check that 
$(\Psi_n)_n$ uniformly converges to a conjugacy $\Psi$ we are looking for.

Let us fix $n\in{\NN}$. 
As $\fn:{\ZZ}/p^n{\ZZ}\rightarrow{\ZZ}/p^n{\ZZ}$ is onto and has only one cycle, it has a full-cycle, and 
thus $(\ZZ /p^n \ZZ , \fn)$ is minimal.
Consequently, it is sufficient to define $\psi_n$ on the orbit of 0,
which is a dense orbit. 
Notice that in this case, dense orbit only means that the orbit goes through each point of ${\ZZ}/p^n{\ZZ}$.

For all $n\in \NN$ and $k\in \{1,\ldots,p^n-1 \}$ we set 

$$
\psi_n (0) = 0 \hbox{ and } \psi_n(f_{/n}^k(0))=k .
$$ 

Hence, $\psi_n \circ \fn = T_{/n} \circ \psi_n$ for all $n\in \NN$.

Let us now check the compatibility condition $\psi_n\circ\varphi_{n}=\varphi_{n}\circ\psi_{n+1}$, that is to say, if $x=(x_n)_{n\in{\NN}}\in{\ZZ}_p$:

$$
\forall n\in{\NN}, \psi_{n+1}(x_{n+1})\equiv\psi_n(x_n)\ [p^n] .
$$

As $x_n$ and $x_{n+1}$ belong to, respectively, $\ZZ/p^n{\ZZ}$ and $\ZZ/p^{n+1}{\ZZ}$, 
there exist $0\leq k_n<p^n$ and $0\leq k_{n+1}<p^{n+1}$ such that $x_n=f_{/n}^{k_n}(0)$ and $x_{n+1}=f_{/n+1}^{k_{n+1}}(0)$. 
Consequently, $f_{/n+1}^{k_{n+1}}(0)\equiv f_{/n}^{k_n}(0)\ [p^n]$ because $\varphi_n(x_{n+1})=x_n$.
Moreover, $f$ being compatible, $f_{/n+1}^{k_{n+1}}(0)\equiv f_{/n}^{k_{n+1}}(0)\ [p^n]$.
This implies $f_{/n}^{k_{n+1}}(0)\equiv f_{/n}^{k_n}(0)\ [p^n]$ and, using Lemma \ref{minimality},
$k_{n+1}-k_n=0$ in ${\ZZ}/p^n{\ZZ}$ which is equivalent to
$\psi_{n+1}(f_{/n+1}^{k_{n+1}}(0))\equiv \psi_n(f_{/n}^{k_n}(0))\ [p^n]$, that is to say
$\psi_{n+1}(x_{n+1})\equiv\psi_n(x_n)\ [p^n]$.

{\bf $(3)\Longrightarrow (4)$}: By definition of the Haar measure.

{\bf $(4)\Longrightarrow (5)$}: Suppose that $f$ is uniquely ergodic. 
Using Proposition \ref{isometry}, 
$f$ being an isometry, the Haar measure $\mu_p$ is invariant.
Thus it is the only invariant ergodic probability measure.

{\bf $(5)\Longrightarrow (1)$}: 
Let $x,y$ in ${\ZZ}_p$ and $\epsilon >0$. 
It suffices to show that for some $k$, $|f^k(x) - y|_p < \epsilon$. 
Let $n$ such that
$p^{-n}<\varepsilon/2$. 
As $f$ is
ergodic for the Haar measure, there exists some $k>0$ such that $\mu_p(f^k(x+p^n{\ZZ}_p)\cap(y+p^n{\ZZ}_p))>0$. 
As $f$ is compatible $f^k(x+p^n{\ZZ}_p)\subset f^k(x)+p^n{\ZZ}_p$ and
thus $(f^k(x)+p^n{\ZZ}_p)\cap(y+p^n{\ZZ}_p)\neq\emptyset$. Therefore these two balls coincide and $f^k(x)$ and $y$
lie in the same ball of radius $p^{-n}$. Then $\vert f^k(x)-y\vert_p\le2p^{-n}\le\varepsilon$.
This concludes the proof.
\end{proof}

It is well known that minimal isometries on topological groups are conjugate to group rotations \cite{Ku}.
Here, we obtain a little more, namely  we prove that the conjugacy 
goes from $\ZZ_p$ to $\ZZ_p$.

\section{Minimality for polynomials}

\subsection{Some preliminary results}

In this section, we are interested in polynomials $f:{\ZZ}_p\to{\ZZ}_p$ with coefficients in ${\ZZ}_p$. 
For the sequel, we need some definitions inspired by \cite{DZ}.
Let $f:{\ZZ}_p\to{\ZZ}_p$ be a polynomial. 
Let $g_{n}=f^{p^n}$.

Suppose there exists a finite sequence $x_0,\ldots,x_{p^n-1}$ in $\ZZ_p$ so that $f_{/n}$ has a full cycle 
$\pi_n(x_0),\ldots,\pi_n (x_{p^n-1})$ in ${\ZZ}/p^n{\ZZ}$. 
Then, $g_n (x_0)$ belongs to $x_0 +p^n \ZZ_p$.
Consequently  $\frac{g_n(x_0)-x_0}{p^n}$ belongs to $\ZZ_p$.
Using Taylor's expansion, for $n\geq 1$, we get for all $z\in \ZZ_p$:

\begin{align}
\label{def-phi}
g_n (x_0+p^nz)\in 
x_0+p^n\left(\frac{g_n(x_0)-x_0}{p^n}\right)+p^n \left(g_n \right)'(x_0)z\ +p^{2n}\ZZ_p .
\end{align}

We set

\begin{align*}
\alpha_{n} (x_0)= \left(g_n \right)'(x_0), \ \beta_{n} (x_0) =\frac{g_n (x_0)-x_0}{p^n} \hbox{ and } \\
\begin{array}{llll}
\Phi_{n}(x_0) : & \ZZ_p & \to     & \ZZ_p \\
              & z     & \mapsto & \alpha_{n}(x_0)z +\beta_n(x_0).
\end{array}
\end{align*} 

We have

\begin{equation}
\label{rel-gn}
g_{n}(x_0+p^nz)
\in
x_0+p^n\Phi_{n}(x_0)(z) + p^{2n} \ZZ_p.
\end{equation}

Then,

\begin{align*}
p^n\beta_n (f(x_0)) & = g_n (f(x_0)) -f(x_0) = f(g_n (x_0)) -f(x_0)\\
& \equiv f(x_0  +p^n\Phi_{n}(x_0)(0) ) - f(x_0) \equiv p^n\Phi_{n}(x_0)(0) f'(x_0)\ [p^{2n}] \\
\end{align*}
 
Hence 

\begin{align}
\label{rel-betan}
\beta_n (f(x_0)) \equiv \beta_n (x_0) f'(x_0) \ [p^n].
\end{align}


\begin{lem}
\label{cycle}
Let $a,b\in \ZZ / p\ZZ$, and

$$
\begin{array}{lrcl}
  h : & {\ZZ}/p{\ZZ} & \to & {\ZZ}/p{\ZZ} \\
   & t & \mapsto & b+at
  \end{array}
$$
then $(\ZZ / p\ZZ , h)$ is minimal if and only if $a = 1$ and $b\not = 0$.
\end{lem}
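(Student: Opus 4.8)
The plan is to read off minimality of $(\ZZ/p\ZZ,h)$ from Lemma~\ref{minimality} applied with $n=p$, using the explicit iterates of an affine map: by induction $h^{k}(t)=a^{k}t+b(1+a+\cdots+a^{k-1})$ in $\ZZ/p\ZZ$, which equals $t+kb$ when $a=1$ and equals $a^{k}t+b\,\frac{a^{k}-1}{a-1}$ when $a\neq 1$ (the fraction being legitimate since $a-1$ is then a unit of the field $\ZZ/p\ZZ$). Recall that Lemma~\ref{minimality} says $h$ is minimal if and only if $h^{p}=\mathrm{id}$ and no $h^{k}$ with $k\not\equiv 0\ [p]$ has a fixed point.

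First I would treat the case $a=1$, where $h(t)=t+b$ and $h^{k}(t)=t+kb$. Then $h^{p}(t)=t+pb=t$, so the first condition of Lemma~\ref{minimality} always holds, while $h^{k}(t)=t$ is equivalent to $p\mid kb$; if $b\neq 0$ then $\gcd(b,p)=1$ forces $p\mid k$ and $h$ is minimal, whereas if $b=0$ then $h=\mathrm{id}$, so $h^{1}$ fixes every point with $1\not\equiv 0\ [p]$ and $h$ is not minimal. This gives the ``if'' direction together with half of ``only if''.

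For $a\neq 1$ I would invoke Fermat's little theorem: $a^{p}\equiv a\ [p]$, and likewise $1+a+\cdots+a^{p-1}\equiv 1\ [p]$, since $(a-1)(1+\cdots+a^{p-1})=a^{p}-1\equiv a-1\ [p]$ and $a-1$ is invertible. Hence $h^{p}(t)=at+b=h(t)$ for all $t$, so $h^{p}=\mathrm{id}$ would force $h=\mathrm{id}$, i.e. $a=1$ and $b=0$, contradicting $a\neq 1$; thus the first condition of Lemma~\ref{minimality} fails and $h$ is not minimal. Equivalently and more directly, the equation $at+b=t$, i.e. $(a-1)t=-b$, has the solution $t=-(a-1)^{-1}b$ in $\ZZ/p\ZZ$, so $h$ has a fixed point and cannot be a full cycle. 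Combining the two cases yields the stated equivalence. There is no real obstacle here: the only substantive input is that $\ZZ/p\ZZ$ is a field (equivalently, Fermat's little theorem), which is exactly what makes the affine equation $(a-1)t=-b$ solvable as soon as $a\neq 1$, and one merely has to keep track of the degenerate subcases $a=0$ and $b=0$.
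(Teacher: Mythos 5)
Your proof is correct. The paper gives no proof of Lemma \ref{cycle} (it is ``easy and left to the reader''), so there is nothing to compare against; your argument --- the case $a=1$ via $h^{k}(t)=t+kb$ and Lemma \ref{minimality}, and the case $a\neq 1$ via the fixed point $t=-(a-1)^{-1}b$ of $h$ (or, equivalently, $h^{p}=h$ by Fermat's little theorem, so $h^{p}=\mathrm{id}$ would force $a=1$, $b=0$) --- is exactly the intended elementary argument, and it correctly covers the degenerate subcases $a=0$ and $b=0$.
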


\begin{proof}
The proof is easy and left to the reader.
\end{proof}

\begin{lem}
\label{lemme-equiv-min}
Let $f:{\ZZ}_p\to{\ZZ}_p$ be a polynomial and $n\geq 1$ be such that 
$(\ZZ / p^n \ZZ , f_{/n})$ is minimal.
Then, the following are equivalent :

\begin{enumerate}
\item
$(\ZZ / p^{n+1} \ZZ , f_{/n+1})$ is minimal;
\item
For all $x\in \ZZ_p$, $f^{p^{n}} (x) -x \not \in p^{n+1}\ZZ_p$ and $\left(f^{p^n}\right)'(x) \in 1 + p\ZZ_p$;
\item
There exists $x\in \ZZ_p$ such that $f^{p^{n}} (x) -x \not \in p^{n+1}\ZZ_p$ and $\left(f^{p^n}\right)'(x) \in 1 + p\ZZ_p$.
\end{enumerate}
\end{lem}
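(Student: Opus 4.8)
The plan is to reduce the statement to the elementary Lemma \ref{cycle} by analysing how $f_{/n+1}$ sits above the full cycle of $f_{/n}$ on $\ZZ/p^n\ZZ$. I would fix $x_0\in\ZZ_p$ and keep the notation $g_n=f^{p^n}$, $\alpha_n(x_0)=(g_n)'(x_0)$ and $\beta_n(x_0)=(g_n(x_0)-x_0)/p^n$ of the text; these make sense because minimality of $f_{/n}$ forces $f_{/n}^{p^n}=\mathrm{id}$ (Lemma \ref{minimality}), so $g_n(x_0)\equiv x_0\pmod{p^n}$. Since $n\ge 1$ we have $p^{2n}\ZZ_p\subseteq p^{n+1}\ZZ_p$, so reducing (\ref{rel-gn}) modulo $p^{n+1}$ gives, for every $z\in\ZZ/p\ZZ$,
$$g_n(x_0+p^nz)\equiv x_0+p^n\bigl(\alpha_n(x_0)z+\beta_n(x_0)\bigr)\pmod{p^{n+1}}.$$
As $f_{/n}^{p^n}$ fixes $\pi_n(x_0)$, the map $f_{/n+1}^{p^n}$ preserves the fibre $F=\varphi_n^{-1}(\pi_n(x_0))\subseteq\ZZ/p^{n+1}\ZZ$, and identifying $F$ with $\ZZ/p\ZZ$ through $x_0+p^nz\leftrightarrow z$, the displayed relation shows that $f_{/n+1}^{p^n}$ acts on $F$ as the affine map $z\mapsto\bar\alpha_n(x_0)z+\bar\beta_n(x_0)$ of $\ZZ/p\ZZ$, where the bars denote reduction mod $p$. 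By Lemma \ref{cycle}, this map is a full cycle of $\ZZ/p\ZZ$ if and only if $\alpha_n(x_0)\in1+p\ZZ_p$ and $\beta_n(x_0)\notin p\ZZ_p$, that is, if and only if $(f^{p^n})'(x_0)\in1+p\ZZ_p$ and $f^{p^n}(x_0)-x_0\notin p^{n+1}\ZZ_p$. Hence (2) asserts that $f_{/n+1}^{p^n}$ restricts to a full cycle of the fibre over $\pi_n(x_0)$ for \emph{every} $x_0$, and (3) the same for \emph{some} $x_0$.

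For $(1)\Rightarrow(2)$ I would assume $f_{/n+1}$ minimal, hence a $p^{n+1}$-cycle, and pick any $x_0$. By (\ref{proj-fn}) and Lemma \ref{minimality}, $f_{/n+1}^{j}(x_0)\in F$ exactly when $p^n\mid j$, so among the $p^{n+1}$ distinct points of the orbit of $x_0$ exactly $p$ lie in the $p$-element set $F$, namely $x_0,\,f_{/n+1}^{p^n}(x_0),\dots,f_{/n+1}^{(p-1)p^n}(x_0)$, and $f_{/n+1}^{p\cdot p^n}(x_0)=x_0$; thus $f_{/n+1}^{p^n}|_F$ is a full cycle of $F$, and the previous paragraph gives the two conditions of (2) for this $x_0$. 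Since $x_0$ was arbitrary, (2) holds; and $(2)\Rightarrow(3)$ is trivial.

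For $(3)\Rightarrow(1)$, from $(f^{p^n})'(x_0)\in1+p\ZZ_p$ and the chain rule $(f^{p^n})'(x_0)=\prod_{i=0}^{p^n-1}f'(f^i(x_0))$ one sees that every factor $f'(f^i(x_0))$ is a unit mod $p$; as $f_{/n}$ is a full cycle the points $f^i(x_0)$ run through all of $\ZZ/p^n\ZZ$ modulo $p^n$, hence through all of $\ZZ/p\ZZ$ modulo $p$, so — $f'$ being a polynomial — $f'(a)$ is a unit mod $p$ for every $a\in\ZZ_p$. Consequently each fibre map of $f_{/n+1}$ is a bijection, and together with bijectivity of $f_{/n}$ this makes $f_{/n+1}$ a permutation of $\ZZ/p^{n+1}\ZZ$. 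Let $C$ be the cycle of $f_{/n+1}$ containing $\pi_{n+1}(x_0)$ and put $L=\#C$; projecting by $\varphi_n$ and using Lemma \ref{minimality} gives $p^n\mid L$, say $L=p^nm$. The hypothesis and the first paragraph say $f_{/n+1}^{p^n}|_F$ is a full cycle of $F$, so $f_{/n+1}^{p^nk}(x_0)=x_0\iff p\mid k$, i.e. $m\mid k\iff p\mid k$ for all $k$, whence $m=p$ and $L=p^{n+1}=\#(\ZZ/p^{n+1}\ZZ)$. So $f_{/n+1}$ is a single $p^{n+1}$-cycle, hence minimal (Lemma \ref{minimality}), and $(1)\Leftrightarrow(2)\Leftrightarrow(3)$.

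I expect the only real obstacle to be the bijectivity argument inside $(3)\Rightarrow(1)$: minimality of $f_{/n}$ does not by itself force $f_{/n+1}$ to be onto, so one genuinely needs the derivative hypothesis — through the chain rule and the fact that the $f_{/n}$-orbit of $x_0$ already sweeps out $\ZZ/p\ZZ$ — to upgrade $f_{/n+1}$ to a permutation before the cycle-length counting can be run; everything else is routine bookkeeping with Lemmas \ref{minimality} and \ref{cycle}.
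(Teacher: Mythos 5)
Your argument is correct, and it shares the paper's backbone — the Taylor expansion \eqref{rel-gn} showing that $f_{/n+1}^{p^n}$ acts on each fibre of $\varphi_n$ as the affine map $z\mapsto\alpha_n(x_0)z+\beta_n(x_0)$ of $\ZZ/p\ZZ$, Lemma \ref{cycle} to decide when that return map is a $p$-cycle, Lemma \ref{minimality} for cycle lengths, and the chain rule — but the logical route differs in the ``existence implies minimality'' part. The paper proves $(3)\Rightarrow(2)$ by transporting both conditions to every point: the derivative condition via the chain rule and the full-cycle property of $f_{/n}$, and the condition $\beta_n\notin p\ZZ_p$ via the propagation relation \eqref{rel-betan}; minimality then follows from $(2)$ because the return map is a $p$-cycle on \emph{every} fibre, so every cycle of $f_{/n+1}$ has length $p^{n+1}$ with no separate bijectivity step needed. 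You instead go directly $(3)\Rightarrow(1)$: the chain rule shows $f'$ is a unit mod $p$ at every residue (because the $f_{/n}$-orbit of $x_0$ sweeps out all of $\ZZ/p\ZZ$), hence every fibre map of $f_{/n+1}$ is a bijection and $f_{/n+1}$ is a permutation, and then a cycle-length count at the single point $x_0$ ($p^n\mid L$ by projection, and the return map to the fibre through $x_0$ being a $p$-cycle rules out $L=p^n m$ with $m\neq p$) forces a full $p^{n+1}$-cycle. What your route buys is that \eqref{rel-betan} is never needed and the hypothesis is exploited at one point only; what the paper's route buys is the pointwise statement $(2)$ obtained directly from $(3)$, and an implication $(2)\Rightarrow(1)$ that requires no surjectivity argument. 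One small expository point: your claim that each fibre map of $f_{/n+1}$ is affine with slope $f'(x)$ mod $p$ rests on the expansion $f(x+p^nz)\equiv f(x)+p^nf'(x)z\ [p^{n+1}]$, the analogue of \eqref{def-phi} with $f$ in place of $g_n$; it is routine (the Taylor coefficients of a polynomial over $\ZZ_p$ lie in $\ZZ_p$ and $2n\geq n+1$), but you should state it, since \eqref{rel-gn} as written only covers the iterates $g_n$.
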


\begin{proof}
Suppose $(\ZZ / p^{n+1} \ZZ , f_{/n+1})$ is minimal.
Let $x\in \ZZ_p$.
If $f^{p^{n}} (x) -x $ belongs to $p^{n+1}\ZZ_p$,
then $f_{/n+1}$ has a cycle of length $p^{n}$, which is not possible. 
From \eqref{rel-gn} we see that, $(\ZZ / p^{n+1} \ZZ , f_{/n+1})$ being minimal, 
$\Phi_n (x)_{/1} : \ZZ/p\ZZ \to \ZZ /p\ZZ$ should be minimal.
Lemma \ref{cycle} implies $\left(f^{p^n}\right)'(x) \in 1 + p\ZZ_p$.
Hence (1) implies (2).
Suppose (2). 
Then, $\Phi_n (x)_{/1}$ is minimal for all $x\in \ZZ_p$.
We deduce with \eqref{rel-gn} that $(\ZZ / p^{n+1} \ZZ , f_{/n+1})$ is minimal :
(2) implies (1).
It is trivial that (2) gives (3). 
Let us show that (3) implies (2).
Suppose there exists $x\in \ZZ_p$ such that $f^{p^{n}} (x) -x \not \in p^{n+1}\ZZ_p$ and $\left(f^{p^n}\right)'(x) \in 1 + p\ZZ_p$.
Let $y\in \ZZ_p$.
Then, $(f^i_{/n}(y))_{0\leq i\leq p^n-1}$ is a full cycle in $\ZZ /p^n\ZZ$.
Hence,

$$
1\equiv\left(f^{p^n} \right)' (x) \equiv \prod_{i=0}^{p^n-1} f'\left(f^i (x)\right) \equiv \prod_{i=0}^{p^n-1} f'\left( i \right)
\equiv
\prod_{i=0}^{p^n-1} f'\left(f^i (y)\right)
\equiv
\left(f^{p^n} \right)' (y) \ [p^n] .
$$

We conclude using \eqref{rel-betan}.
\end{proof}

\begin{prop}\label{polynomials}
Let $f:{\ZZ}_p\to{\ZZ}_p$ be a polynomial, then
$(\ZZ_p , f)$ is minimal if and only if $({\ZZ}/p^\delta {\ZZ} ,f_{/ \delta} )$ is minimal, where 
$\delta = 2$ if $p> 3$ and $\delta= 3$ if $p\in \{ 2,3 \}$.
\end{prop}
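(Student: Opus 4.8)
The plan is to treat the two implications separately. The forward one is immediate: if $(\ZZ_p,f)$ is minimal, then by Theorem \ref{equivalences} every $f_{/m}$ has a full-cycle, so in particular $(\ZZ/p^\delta\ZZ,f_{/\delta})$ is minimal. For the converse, I would assume $(\ZZ/p^\delta\ZZ,f_{/\delta})$ minimal; projecting a full-cycle through a factor map shows $(\ZZ/p^k\ZZ,f_{/k})$ is minimal for every $k\le\delta$, and by Theorem \ref{equivalences} (which also forces $f$ onto, via Proposition \ref{isometry}, once every $f_{/n}$ is a full-cycle) it then suffices to prove that $(\ZZ/p^n\ZZ,f_{/n})$ is minimal for all $n$. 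This I would do by induction on $n\ge\delta$, the case $n=\delta$ being the hypothesis. So fix $n\ge\delta$ with $f_{/n}$ minimal; by Lemma \ref{lemme-equiv-min}(3) it is enough to exhibit a single $x_0\in\ZZ_p$ with $\alpha_n(x_0)\in1+p\ZZ_p$ and $\beta_n(x_0)\notin p\ZZ_p$.

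The first condition in fact holds for every $x_0$. Since $f_{/n}$ is minimal, the orbit $(f^i(x_0))_{0\le i<p^n}$ meets every class modulo $p^n$, hence every class modulo $p$ exactly $p^{n-1}$ times, so $\alpha_n(x_0)=\prod_{i=0}^{p^n-1}f'(f^i(x_0))\equiv\bigl(\prod_{j=0}^{p-1}f'(j)\bigr)^{p^{n-1}}\pmod p$; and Lemma \ref{lemme-equiv-min} at level $1$ (legitimate because $f_{/1}$ and $f_{/2}$ are minimal, as $\delta\ge2$) gives $(f^p)'(x)\in1+p\ZZ_p$, which by the same orbit argument at level $1$ means $\prod_{j=0}^{p-1}f'(j)\equiv1\pmod p$. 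Hence $\alpha_n(x_0)\in1+p\ZZ_p$.

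The core of the argument is a comparison between $\beta_{m+1}$ and $\beta_m$. Let $m\ge2$ with $f_{/m}$ minimal, put $g_m=f^{p^m}$, and, since $g_m(x_0+p^m\ZZ_p)\subseteq x_0+p^m\ZZ_p$, write $g_m^k(x_0)=x_0+p^mz_k$ with $z_k\in\ZZ_p$, $z_0=0$. By \eqref{rel-gn}, $z_{k+1}=\alpha_m(x_0)z_k+\beta_m(x_0)+p^me_{k+1}$ with $e_{k+1}\in\ZZ_p$; iterating $p$ times and using $g_{m+1}=g_m^p$ yields $p^{m+1}\beta_{m+1}(x_0)=g_{m+1}(x_0)-x_0=p^mz_p$ with $z_p\equiv\beta_m(x_0)\sum_{j=0}^{p-1}\alpha_m(x_0)^j\pmod{p^2}$, the accumulated $p^me_k$-terms lying in $p^2\ZZ_p$ precisely because $m\ge2$. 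Writing $\alpha_m(x_0)=1+p\gamma$ and expanding, $\sum_{j=0}^{p-1}\alpha_m(x_0)^j\equiv p+p\gamma\binom p2\pmod{p^2}$; for $p$ odd $\binom p2\equiv0\pmod p$, so $\beta_{m+1}(x_0)\equiv\beta_m(x_0)\pmod p$, while for $p=2$ one gets $\beta_{m+1}(x_0)\equiv(1+\gamma)\beta_m(x_0)\pmod2$ and one must additionally note that minimality of $f_{/1}$ forces every $f'(j)$ odd and $\prod_{j=0}^3f'(j)\equiv(f'(0)f'(1))^2\equiv1\pmod4$, hence $\alpha_m(x_0)\equiv1\pmod4$, i.e. $\gamma$ even, so again $\beta_{m+1}(x_0)\equiv\beta_m(x_0)\pmod2$. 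Thus, whenever $m\ge2$ and $f_{/m}$ is minimal, $\beta_{m+1}\equiv\beta_m\pmod p$ on all of $\ZZ_p$.

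To close the induction: if $n\ge3$, apply the previous step with $m=n-1\ge2$ (using that $f_{/n-1}$ is minimal) to get $\beta_n(x_0)\equiv\beta_{n-1}(x_0)\pmod p$, while Lemma \ref{lemme-equiv-min}, (1)$\Rightarrow$(2) at level $n-1$ (using that $f_{/n}$ is minimal) gives $\beta_{n-1}(x_0)\notin p\ZZ_p$; hence $\beta_n(x_0)\notin p\ZZ_p$ and Lemma \ref{lemme-equiv-min}(3) gives minimality of $f_{/n+1}$. When $\delta=3$ (that is, $p\in\{2,3\}$) this already covers every step. When $p>3$, so $\delta=2$, only the first step $n=2$ remains, and there the error $p^m=p$ is no longer negligible mod $p^2$; instead one expands $g_1=f^p$ to second order and, with $b=\beta_1(x_0)$ and $c=g_1''(x_0)/2\in\ZZ_p$, iterates $z_{k+1}\equiv b+\alpha_1(x_0)z_k+cp\,z_k^2\pmod{p^2}$ from $z_0=0$ (so $z_k\equiv kb\pmod p$) to obtain $\beta_2(x_0)=z_p/p\equiv b+b\gamma\tfrac{p(p-1)}2+cb^2\tfrac{p(p-1)(2p-1)}6\pmod p$; since $p>3$ is prime to $6$ the last two sums vanish mod $p$, so $\beta_2(x_0)\equiv\beta_1(x_0)\pmod p$, nonzero because $f_{/2}$ is minimal, and with $\alpha_2(x_0)\in1+p\ZZ_p$ and Lemma \ref{lemme-equiv-min}(3) this makes $f_{/3}$ minimal, completing the induction. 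The main obstacle is exactly this last computation, which also explains why $\delta=2$ fails for $p\in\{2,3\}$: for $p=3$ one has $\tfrac{p(p-1)(2p-1)}6=5\equiv2\pmod3$, so $\beta_2(x_0)\equiv b(1+2cb)\pmod3$ can vanish with $b\ne0$; keeping careful control of $p$-adic precision (working modulo $p^2$, and modulo $4$ for the $\alpha$'s when $p=2$) rather than merely modulo $p$ is the delicate point throughout.
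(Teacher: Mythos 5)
Your proof is correct and follows essentially the same route as the paper: induction on $n$ via Lemma \ref{lemme-equiv-min}, the observation that $\alpha_n\in 1+p\ZZ_p$ always holds, and the comparison $\beta_n\equiv\beta_{n-1}\ [p]$ handled in the same three cases (odd $p$ with $n\geq 3$; $p=2$ with the extra control $\alpha\equiv 1\ [4]$; $p\geq 5$, $n=2$ via the second-order expansion and the vanishing of $\tfrac{(p-1)p(2p-1)}{6}$ mod $p$), exactly as in the paper's proof. One small wording slip: for $p=2$ the oddness of the $f'(j)$ comes from minimality of $f_{/2}$ (Lemma \ref{lemme-equiv-min} applied at level $1$), not from minimality of $f_{/1}$ alone, but that hypothesis is available at the step where you use it, so the argument stands.
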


\begin{proof}
The necessary condition is due to Theorem \ref{equivalences}.
Suppose $({\ZZ}/p^\delta{\ZZ} ,f_{/ \delta} )$ is minimal, i.e $f_{/ \delta }:{\ZZ}/p^\delta {\ZZ}\to{\ZZ}/p^\delta {\ZZ}$ has one full-cycle. 
From Theorem \ref{equivalences}, it is sufficient to prove that, for any $n$,
$\fn:{\ZZ}/p^n{\ZZ}\to{\ZZ}/p^n{\ZZ}$ has a full-cycle. 
Remark that, as the number of cycles in ${\ZZ}/p^n{\ZZ}$ is non decreasing with $n$, this implies that
$f_{/ n}$ has only one cycle for all $n \leq \delta$.

Let us proceed by induction on $n$ for the other cases.
Suppose $(\ZZ /p^n \ZZ , f_{/n})$ is minimal.
From Lemma \ref{lemme-equiv-min} it suffices to prove that 
$\beta_n (0) \not \in p\ZZ_p$ and $\alpha_n(0) \in 1 + p\ZZ_p$.
As $\delta$ is greater than $2$, Lemma \ref{lemme-equiv-min}
implies that $\alpha_{n-1} (0)$ belongs to $1 +p\ZZ_p$ and $\beta_{n-1} (0)$ does not belong to $p\ZZ_p$.
We set $\alpha_k =\alpha_k (0)$,  $\beta_k =\beta_k (0)$ and $\Phi_k = \Phi_k (0)$ for all $k$.
In order to conclude, we establish relations between $(\alpha_{n} , \beta_{n})$ and $(\alpha_{n-1} , \beta_{n-1})$.
Remark that  $g_{n+1}=g_{n}^p$ so, using \eqref{rel-gn},

\begin{align}
\label{calcul-derivee}
\alpha_{n} 
=
(g_{n-1}^p)'(0)
=
\prod_{i=0}^{p-1}g_{n-1}'(g_{n-1}^i(0))\in g_{n-1}'(0)^p +p^{n-1} \ZZ_p\subset 1 + p\ZZ_p .
\end{align}

For $\beta_n$ the situation is not so easy.
We need to consider three different cases.


Let us first consider the case $p=3$ and $n\geq 3$.
We have 

$$
\beta_{n}  =\frac{g_{n}(0)}{p^n}=\frac{g_{n-1}^p(0)}{p^n} , \ 
g_{n-1}^p(0)\in   \Phi_{n-1}^p (0)p^{n-1} +  p^{2(n-1)} \ZZ_p \hbox{ and }
$$

$$
\Phi_{n-1}^p (0) =  \beta_{n-1}\left(1+\alpha_{n-1}+\ldots+\alpha_{n-1}^{p-1}\right)\in \beta_{n-1}p+p^2\ZZ_p.
$$

Hence $\beta_n \in \beta_{n-1}  +p \ZZ_p$,
which conclude the proof for $p=3$.


Let us now consider the case $p=2$ and $n\geq 3$.
Then $n-2\geq 1$ and $(\ZZ /p^{n-2}\ZZ , f_{/n-2})$ is minimal.
From Lemma \ref{lemme-equiv-min} we obtain $\alpha_{n-2}=1+pz$ for some $z\in \ZZ_p$.
Consequently, proceeding as in \eqref{calcul-derivee},

\begin{align*}
\alpha_{n-1} 
&=
\alpha_{n-2} \alpha_{n-2} (g_{n-2} (0))= \alpha_{n-2}\left( \alpha_{n-2}  + p^{n-2}z' g_{n-2}'' (0)  \right) \\
& = 1+ 2pz +p^2z^2 + \alpha_{n-2} p^{n-2}z' g_{n-2}'' (0) \in 1 + p^2\ZZ_p ,
\end{align*}
for some $z'\in \ZZ_p$, because $g''_{n-2}(x)$ belongs to $p\ZZ_p$ for all $x\in \ZZ_p$.
Then, doing as in the previous case, we deduce that $\beta_n \in \beta_{n-1}  +p \ZZ_p$.


Let us end with the case $p\geq 5$ and $n\geq 2$.
Note that what we did in the case $p=3$ also holds here.
Hence we just have to consider the case $n=2$ and thus to prove that 
$\beta_2 \in \beta_1 +p\ZZ_p$.
We set $\gamma = g_{1}'' (0)/2$ (as $p\geq 5$, it belongs to $\ZZ_p$).
By induction, as observed in \cite{DZ}, we can prove that 

\begin{align*}
g_{1}^i (0)
& \equiv
p\beta_{1} \sum_{j=0}^{i-1} \alpha_{1}^j +p^2\gamma\beta_{1}^2  \sum_{j=0}^{i-2} \alpha_{1}^{i-2-j} \left(1+\alpha_{1}+\cdots + \alpha_{1}^j\right)^2 \ \ [p^3],
\end{align*}

hence

\begin{align}
\label{calc-iterate}
g_{1}^p (0)
& \equiv
p^2\beta_{1}  +p^2\gamma\beta_{1}^2  \frac{(p-1)p(2p-1)}{6} =  p^2\beta_{1}  \ \  [p^3].
\end{align}

This achieves the proof.
\end{proof}

\begin{rema}
The value of $\delta$ obtained in Proposition \ref{polynomials} cannot be smaller.

For $p=2$, let $f(x) = 1+3x+2x^3$. 
The map $f_{/2}$ has a full cycle 0,1,2,3 but $(\ZZ_2 , f)$ is not minimal : $f_{/3}$ has a cycle 0,1,6,3. 

For $p=3$, let $f(x) = 1+4x+4x^3+2x^5$.
The map $f_{/2}$ has a full cycle 0,1,2,6,7,5,3,4,8 but $(\ZZ_3 , f)$ is not minimal, $f_{/3}$ has a cycle 0,1,11,15,7,23,3,13,17.
\end{rema}


\begin{rema}\label{a0}
Let $f$ be a polynomial with $f(0)=a_0$. If $f$ is minimal, then $a_0\not\equiv 0\ [p]$, otherwise 0 is a fixed point modulo $p$ for $f$. Let $g(x)=\frac{1}{a_0}f(a_0x)$.
Then $g(0)=1$ and $(\ZZ_p,f)$ and $(\ZZ_p,g)$ are conjugate. Therefore, $f$ is minimal if and only if $g$ is minimal. Hence we will restrict in the following to polynomials
$f$ with $f(0)=1$. 
\end{rema}

\subsection{Characterization for $p=2$}

Using the previous ideas, we can characterize the polynomials that are minimal in ${\ZZ}_2$. The result is not new
(see for example \cite{La}) but the method described here is interesting as it will also apply to ${\ZZ}_3$.

Let $f(x) = a_d x^d +a_{d-1}x^{d-1} +\cdots + a_1x +1$ be a polynomial of degree $d$. 
We set 

$$
\begin{array}{ll}
A_0 = \sum_{i\in 2\ZZ, i\not =0} a_i, & A_1 = \sum_{i\in 1+2\ZZ} a_i.
\end{array}
$$

\begin{lem}
\label{lemme-z2z}
The dynamical system $(\ZZ / 2\ZZ , f_{/1})$ is minimal if and only if
 $$A_0+A_1\in 1+2\ZZ_2.$$
\end{lem}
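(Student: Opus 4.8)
The plan is to reduce $f$ modulo $2$, recognize $f_{/1}$ as an affine map of $\ZZ/2\ZZ$, and then invoke Lemma \ref{cycle}.

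First I would use that in $\ZZ/2\ZZ$ one has $t^k = t$ for every integer $k\geq 1$ (indeed $0^k=0$ and $1^k=1$). Hence, writing $f(x)=a_dx^d+\cdots+a_1x+1$, its reduction satisfies, for $t\in\ZZ/2\ZZ$,
$$
f_{/1}(t)\equiv 1+\Bigl(\sum_{i=1}^d a_i\Bigr)t\ [2].
$$
By the very definition of $A_0$ and $A_1$, their sum $A_0+A_1$ ranges over all indices $i\in\{1,\dots,d\}$, even and odd alike, so $A_0+A_1=\sum_{i=1}^d a_i$. Thus $f_{/1}$ is the affine map $t\mapsto 1+(A_0+A_1)t$ of $\ZZ/2\ZZ$, with constant term the class of $1$ and linear coefficient the class of $A_0+A_1$ modulo $2$.

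Finally I would apply Lemma \ref{cycle} with $p=2$, $b$ the class of $1$, and $a$ the class of $A_0+A_1$: the system $(\ZZ/2\ZZ,f_{/1})$ is minimal if and only if $b\neq 0$ and $a=1$. Since $f(0)=1$, the condition $b\neq 0$ is automatic, and $a=1$ means exactly $A_0+A_1\equiv 1\ [2]$, i.e. $A_0+A_1\in 1+2\ZZ_2$. (Equivalently, without Lemma \ref{cycle}: as $\ZZ/2\ZZ$ has only two points, $f_{/1}$ is minimal iff it exchanges $0$ and $1$; since $f_{/1}(0)=1$ always holds, this amounts to $f_{/1}(1)=0$, that is $1+\sum_{i=1}^d a_i\equiv 0\ [2]$.)

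There is essentially no obstacle here: the statement is a direct computation. The only point that deserves care is the collapse $t^k=t$ valid in $\FF_2$, which is precisely what allows a polynomial of arbitrary degree to be handled as an affine map and makes Lemma \ref{cycle} applicable.
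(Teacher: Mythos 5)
Your proof is correct and is essentially the paper's argument: the paper simply computes $f_{/1}(0)=1$ and $f_{/1}(1)=1+A_0+A_1$ and notes that minimality on the two-point space $\ZZ/2\ZZ$ forces $1+A_0+A_1\equiv 0\ [2]$, which is exactly your parenthetical remark. Your detour through the collapse $t^k=t$ in $\ZZ/2\ZZ$ and Lemma \ref{cycle} is a harmless repackaging of the same two-point computation.
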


\begin{proof}
If comes from the fact that $f_{/1}(0)=1$ and $f_{/1}(1)=1+A_0+A_1$. Hence $f_{/1}$ is minimal if and only if $1+A_0+A_1\equiv 0\ [2]$.
\end{proof}

\begin{lem}
\label{lem-min4}
The dynamical system $(\ZZ / 4\ZZ , f_{/2})$ is minimal if and only if 
  $$\left\{\begin{array}{l}
    a_1\equiv 1\ [2] \\
    A_1\equiv 1\ [2] \\
    A_0+A_1\equiv 1\ [4]
    \end{array}\right.$$
\end{lem}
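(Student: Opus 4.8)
The plan is to combine Lemma~\ref{lemme-z2z} with Lemma~\ref{lemme-equiv-min} applied at $n=1$ (so that $p^{n}=2$), testing the two conditions of that lemma at the single point $x=0$.

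First I would dispose of the prerequisite. Since $f_{/1}$ is a factor of $f_{/2}$ via $\varphi_{1}$ (by \eqref{proj-fn}), a full cycle of $f_{/2}$ in $\ZZ/4\ZZ$ pushes forward to a full cycle of $f_{/1}$ in $\ZZ/2\ZZ$; so if $(\ZZ/4\ZZ,f_{/2})$ is minimal then $(\ZZ/2\ZZ,f_{/1})$ is minimal, and by Lemma~\ref{lemme-z2z} this forces $A_{0}+A_{1}\in 1+2\ZZ_{2}$, i.e.\ $A_{0}+A_{1}\equiv 1\ [2]$. Conversely, the hypothesis $A_{0}+A_{1}\equiv 1\ [4]$ trivially implies $A_{0}+A_{1}\equiv 1\ [2]$, so again $(\ZZ/2\ZZ,f_{/1})$ is minimal. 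In either case I may now invoke Lemma~\ref{lemme-equiv-min} with $n=1$: $(\ZZ/4\ZZ,f_{/2})$ is minimal if and only if $f^{2}(x)-x\notin 4\ZZ_{2}$ and $(f^{2})'(x)\in 1+2\ZZ_{2}$ for one (equivalently every) $x\in\ZZ_{2}$.

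I would evaluate both conditions at $x=0$. Since $f(0)=1$ and $(\ZZ/2\ZZ,f_{/1})$ is minimal, the $f_{/1}$-orbit of $0$ is $0,1$, whence $f^{2}(0)=f(f(0))=f(1)=1+A_{0}+A_{1}$, while the chain rule gives $(f^{2})'(0)=f'(f(0))\,f'(0)=f'(1)\,a_{1}$ with $f'(1)=\sum_{i=1}^{d} i\,a_{i}$. Now $(f^{2})'(0)\in 1+2\ZZ_{2}$ means exactly that $(f^{2})'(0)$ is a $2$-adic unit, i.e.\ odd, i.e.\ that both $a_{1}$ and $f'(1)$ are odd; and modulo $2$ only the odd-index coefficients survive in $f'(1)$, so $f'(1)\equiv\sum_{i\ \mathrm{odd}}a_{i}=A_{1}\ [2]$. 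Hence this condition is precisely $a_{1}\equiv 1\ [2]$ together with $A_{1}\equiv 1\ [2]$. On the other hand, minimality modulo $2$ already gives $f(1)=1+A_{0}+A_{1}\equiv 0\ [2]$, so $f^{2}(0)=f(1)\notin 4\ZZ_{2}$ is equivalent to $f(1)\equiv 2\ [4]$, that is $A_{0}+A_{1}\equiv 1\ [4]$.

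Collecting these equivalences, the minimality of $(\ZZ/4\ZZ,f_{/2})$ is equivalent to the conjunction $a_{1}\equiv 1\ [2]$, $A_{1}\equiv 1\ [2]$, $A_{0}+A_{1}\equiv 1\ [4]$ — the claimed statement, where the last congruence also absorbs the prerequisite $A_{0}+A_{1}\equiv 1\ [2]$; the ``there exists $x$'' form (3) of Lemma~\ref{lemme-equiv-min}, checked at $x=0$, is what gives the converse implication. The only point requiring care — the ``hard part'', such as it is — is the derivative bookkeeping: that the constant term contributes nothing to $f'$, that $f'(0)=a_{1}$ on the nose, and that $f'(1)\equiv A_{1}\ [2]$ with $A_{1}$ including the coefficient $a_{1}$. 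Once this is clear, the rest is an immediate application of Lemmas~\ref{lemme-z2z} and~\ref{lemme-equiv-min}.
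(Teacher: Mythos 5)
Your proposal is correct and follows essentially the same route as the paper: reduce to Lemma~\ref{lemme-z2z} for minimality modulo $2$, then apply Lemma~\ref{lemme-equiv-min} with $n=1$ at the point $x=0$, computing $(f^{2})'(0)\equiv a_{1}A_{1}\ [2]$ and $f^{2}(0)=1+A_{0}+A_{1}$ to obtain exactly the three stated congruences.
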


\begin{proof}
Suppose $(\ZZ / 4\ZZ , f_{/2})$ minimal, then $(\ZZ / 2\ZZ , f_{/1})$ is also minimal, therefore $A_0+A_1\equiv 1\ [2]$ (see Lemma
\ref{lemme-z2z}). In view of Lemma \ref{lemme-equiv-min}, $(\ZZ / 4\ZZ , f_{/2})$ is minimal if and only if 
  $$A_0+A_1\equiv 1\ [2]\ ,\ (f^2)'(0)\in 1+2\ZZ_2\ \mbox{and}\ f^2(0)\in 2\ZZ_2\setminus 4\ZZ_2$$
First, $(f^2)'(0)\equiv f'(0)f'(1)\equiv a_1\sum_{i\geq 1}ia_i\equiv a_1A_1\ [2]$, which is equivalent to $a_1\in 1+2\ZZ_2$ and $A_1\in 1+2\ZZ_2$. Secondly, $f^2(0)=1+A_0+A_1$.
We thus get immediately $1+A_0+A_1\not\equiv 0\ [4]$ and consequently $A_0+A_1\equiv 1\ [4]$.
\end{proof}

\begin{theo}
The dynamical system $(\ZZ_2,f)$ is minimal if and only if 
  $$\left\{\begin{array}{l}
    a_1\equiv 1\ [2] \\
    A_1\equiv 1\ [2] \\
    A_0+A_1\equiv 1\ [4] \\
    2a_2+a_1A_1\equiv 1\ [4]
    \end{array}\right.$$
\end{theo}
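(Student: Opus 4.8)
By Proposition \ref{polynomials}, $(\ZZ_2,f)$ is minimal if and only if $(\ZZ/8\ZZ,f_{/3})$ is minimal, and by Lemma \ref{lemme-equiv-min} (applied with $n=2$, assuming $(\ZZ/4\ZZ,f_{/2})$ is already minimal) this holds if and only if $(\ZZ/4\ZZ,f_{/2})$ is minimal together with the two conditions $(f^4)'(0)\in 1+2\ZZ_2$ and $f^4(0)\in 4\ZZ_2\setminus 8\ZZ_2$. Lemma \ref{lem-min4} already translates minimality of $(\ZZ/4\ZZ,f_{/2})$ into the first three congruences in the statement, so the whole task reduces to showing that, under those three congruences, the pair of conditions on $(f^4)'(0)$ and $f^4(0)$ is equivalent to the single congruence $2a_2+a_1A_1\equiv 1\ [4]$.

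First I would dispose of the derivative condition. By the argument already used in Lemma \ref{lem-min4} (the chain rule applied along the full cycle $0,1,2,3$ of $f_{/2}$, combined with \eqref{calcul-derivee} in the proof of Proposition \ref{polynomials}), one has $(f^4)'(0)\equiv (f^2)'(0)^2\ [4]$ and $(f^2)'(0)$ is already odd once $(\ZZ/4\ZZ,f_{/2})$ is minimal; hence $(f^4)'(0)\in 1+2\ZZ_2$ is automatic and carries no new information. So the only content left is the single requirement $f^4(0)\in 4\ZZ_2\setminus 8\ZZ_2$, i.e. $f^4(0)\equiv 4\ [8]$ (since $f^4(0)$ is already a multiple of $4$ whenever $f_{/2}$ has a full cycle).

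The heart of the proof is therefore a direct computation of $f^4(0)\bmod 8$ in terms of the coefficients. I would use the notation of the preliminary subsection with $p=2$, $n=1$: writing $g_1=f^2$, $\alpha_1=(f^2)'(0)$, $\beta_1=(f^2(0))/2=(1+A_0+A_1)/2$, and $\gamma=g_1''(0)/2$, the relation \eqref{rel-gn} gives $g_1(2z)\equiv 2\beta_1+2\alpha_1 z\ [4]$ and, pushing the Taylor expansion of $g_1$ one order further, $f^4(0)=g_1(g_1(0))=g_1(2\beta_1)\equiv 2\beta_1+2\alpha_1\beta_1+4\gamma\beta_1^2\ [8]$, so that $f^4(0)/4\equiv \beta_1(1+\alpha_1)/2+\gamma\beta_1^2\ [2]$. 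Since $\alpha_1$ is odd, $\beta_1(1+\alpha_1)/2\equiv\beta_1\cdot((1+\alpha_1)/2)\ [2]$, and the condition $f^4(0)\equiv 4\ [8]$ becomes $\beta_1\big((1+\alpha_1)/2+\gamma\beta_1\big)\equiv 1\ [2]$; as $\beta_1$ is already odd (third congruence of Lemma \ref{lem-min4}) this is $(1+\alpha_1)/2+\gamma\beta_1\equiv 1\ [2]$. It then remains to expand $\alpha_1=(f^2)'(0)=f'(0)f'(1)=a_1\sum_i i a_i$ and $g_1''(0)=(f^2)''(0)$ via the chain rule in terms of $a_1,a_2,A_0,A_1$ modulo the relevant power of $2$, plug in, and check the identity collapses to $2a_2+a_1A_1\equiv 1\ [4]$.

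The main obstacle I anticipate is purely bookkeeping in that last step: one must compute $(f^2)''(0)$ — which involves $f''(0)=2a_2$, $f''(1)=\sum_i i(i-1)a_i$, $f'(0)=a_1$, $f'(1)=A_1'$ (a weighted sum of coefficients) — reduce everything modulo $4$ using the already-established facts $a_1\equiv 1\ [2]$, $A_1\equiv 1\ [2]$, $A_0+A_1\equiv 1\ [4]$, and track which cross-terms survive. The parity reductions kill most terms (e.g. many $i(i-1)$ factors are even), and after substitution the displayed condition should telescope to $2a_2+a_1A_1\equiv 1\ [4]$; the only real care needed is not to drop a term that is odd times $2$ when working modulo $4$. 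No deep idea is required beyond the machinery of Section 4 already assembled; it is the same strategy as Lemma \ref{lem-min4} carried one level higher.
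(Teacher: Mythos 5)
Your plan follows the paper's own proof essentially step for step: the same reduction via Proposition \ref{polynomials} and Lemma \ref{lemme-equiv-min} to minimality of $(\ZZ/4\ZZ,f_{/2})$ plus conditions on $(f^4)'(0)$ and $f^4(0)$, the same observation that the derivative condition is automatic, the same Taylor expansion giving $f^4(0)\equiv 2\beta_1\bigl(1+(f^2)'(0)\bigr)+4\beta_1^2\frac{(f^2)''(0)}{2}\ [8]$ and hence the reduction to $(f^2)'(0)+(f^2)''(0)\equiv 1\ [4]$, followed by the same coefficient bookkeeping. The only part you leave unexecuted is that final mod-$4$ computation of $(f^2)'(0)+(f^2)''(0)$, which indeed collapses to $2a_2+a_1A_1$ exactly as you predict, so the proposal is correct and matches the paper's argument.
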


\begin{proof}
Lemma \ref{lemme-equiv-min} implies that $(\ZZ_2,f)$ is minimal if and only if
  $$(\ZZ / 4\ZZ , f_{/2})\ \mbox{is minimal,}\ (f^4)'(0)\in 1+2\ZZ_2\ \mbox{and}\ f^4(0)\in 4\ZZ_2\setminus 8\ZZ_2.$$
First, $(f^4)'(0)\equiv ((f^2)'(0))^2\ [2]$ so this condition has already been checked. Secondly, using Taylor's expansion, we get
  $$f^2(pz)\equiv p(\beta_1+\alpha_1z)+p^2z^2\frac{(f^2)''(0)}{2}\ [8]$$
and then
  $$f^4(0)\equiv 2\beta_1(1+(f^2)'(0))+4\beta_1^2\frac{(f^2)''(0)}{2}\ [8]$$
with $\beta_1\equiv 1 [2]$ and $1+(f^2)'(0)\equiv 0\ [2]$. The condition $f^4(0)\in 4\ZZ_2\setminus 8\ZZ_2$ is therefore equivalent to
  $$(f^2)'(0)+(f^2)''(0)\equiv 1\ [4].$$
Using $a_1\equiv 1\ [2]$,
 \begin{eqnarray*}
 (f^2)'(0) & \equiv & a_1\sum_{i\geq 1}ia_i\equiv a_1\left(\sum_{i\geq 0}a_{4i+1}+2\sum_{i\geq 0}a_{4i+2}-\sum_{i\geq 0}a_{4i+3}\right)\ [4]\\
  & \equiv & a_1\sum_{i\geq 0}a_{4i+1}+2\sum_{i\geq 0}a_{4i+2}-a_1\sum_{i\geq 0}a_{4i+3}\ [4].
  \end{eqnarray*}
Moreover, as $\sum_{i\geq 2}i(i-1)a_i\equiv 0\ [2]$, $a_1^2\equiv 1\ [2]$ and $\sum_{i\geq 1}ia_i\equiv A_1\equiv 1\ [2]$:
  \begin{eqnarray*}
  (f^2)''(0) & = & f''(0)f'(1)+(f'(0))^2f''(1)=2a_2\sum_{i\geq 1}ia_i+a_1^2\sum_{i\geq 2}i(i-1)a_i \\
   & \equiv & 2a_2+\sum_{i\geq 2}i(i-1)a_i\equiv\sum_{i\geq 3}i(i-1)a_i\ [4]\\
   & \equiv & 2\sum_{i\geq 1}a_{4i+2}+2\sum_{i\geq 0}a_{4i+3}\ [4].
   \end{eqnarray*}
This gives, using $2-a_1\equiv a_1\ [4]$
  \begin{eqnarray*}
  (f^2)'(0)+(f^2)''(0) & \equiv & 2a_2+a_1\sum_{i\geq 0}a_{4i+1}+(2-a_1)\sum_{i\geq 0}a_{4i+3}\ [4] \\
  & \equiv & 2a_2+a_1A_1\ [4].
  \end{eqnarray*}
This achieves the proof.
\end{proof}

These conditions are easily proved to be equivalent to those given in \cite{La}:
  $$\left\{\begin{array}{l}
    a_1\equiv 1\ [2] \\
    A_1-a_1\equiv 2a_2\ [4] \\
    A_0-a_2\equiv a_1+a_2-1\ [4].
    \end{array}\right.$$

\begin{coro}
Let $f(x) = a_d x^d +a_{d-1}x^{d-1} +\cdots + a_1x +a_0$ be a polynomial of degree $d$. The dynamical system $(\ZZ_2,f)$ is minimal if and only if 
  $$\left\{\begin{array}{l}
    a_1\equiv 1\ [2] \\
    A'_1\equiv 1\ [2] \\
    A'_0+A'_1\equiv 1\ [4] \\
    2a_2a_0+a_1A'_1\equiv 1\ [4]
    \end{array}\right.$$
where 
  $$A'_0 = \sum_{i\in 2\ZZ, i\not =0} a_ia_0^{i-1}\ \ and\ \  A'_1 = \sum_{i\in 1+2\ZZ} a_ia_0^{i-1}.$$
\end{coro}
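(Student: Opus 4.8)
The plan is to reduce the general case $f(x)=a_dx^d+\cdots+a_1x+a_0$ to the normalized case $f(0)=1$ already treated in the preceding theorem, via the conjugacy of Remark \ref{a0}. First I would recall that if $f$ is minimal then necessarily $a_0\not\equiv 0\ [2]$, so $a_0$ is a unit in $\ZZ_2$ and $g(x)=\frac{1}{a_0}f(a_0x)$ is a well-defined polynomial with coefficients in $\ZZ_2$; by Remark \ref{a0}, $(\ZZ_2,f)$ is minimal if and only if $(\ZZ_2,g)$ is minimal.

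Next I would compute the coefficients of $g$ explicitly. Writing $g(x)=\sum_{i=0}^d b_i x^i$ one gets $b_i = a_i a_0^{i-1}$ for every $i$ (in particular $b_0=1$ and $b_1=a_1$), and hence the quantities $A_0,A_1$ attached to $g$ in the preceding theorem become exactly $A_0'=\sum_{i\in 2\ZZ,i\neq 0}a_ia_0^{i-1}$ and $A_1'=\sum_{i\in 1+2\ZZ}a_ia_0^{i-1}$, while $b_2=a_2a_0$. Then I would simply substitute into the four conditions of the theorem applied to $g$: the condition $b_1\equiv 1\ [2]$ reads $a_1\equiv 1\ [2]$; $A_1(g)\equiv 1\ [2]$ reads $A_1'\equiv 1\ [2]$; $A_0(g)+A_1(g)\equiv 1\ [4]$ reads $A_0'+A_1'\equiv 1\ [4]$; and $2b_2+b_1A_1(g)\equiv 1\ [4]$ reads $2a_2a_0+a_1A_1'\equiv 1\ [4]$. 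This yields precisely the stated system.

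The only point requiring a little care — and the step I expect to be the main (minor) obstacle — is the reduction modulo powers of $2$ when rewriting the conditions: one must check that the substitution $b_i=a_ia_0^{i-1}$ is compatible with the congruences, in particular that $a_0^{i-1}$ may be replaced by $a_0^{j-1}$ when $i\equiv j\ [2^k]$ to the relevant precision. Since in all four conditions the only residues of the exponent that matter are its parity (for the first three, working mod $2$ or mod $4$ with $a_0$ a unit and $a_0^2\equiv 1\ [8]$) one checks that $a_0^{i-1}\equiv a_0\ [\ ]$ for $i$ odd and $a_0^{i-1}\equiv 1\ [\ ]$ for $i$ even to the needed modulus; this lets the $a_0$-powers factor out of the sums exactly as the definitions of $A_0'$ and $A_1'$ record, and it legitimizes the passage from $2b_2+b_1A_1(g)$ to $2a_2a_0+a_1A_1'$. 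Once these elementary $2$-adic bookkeeping facts are in place, the corollary follows directly from the theorem.
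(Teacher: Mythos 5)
Your reduction via Remark \ref{a0} is exactly how the paper intends this corollary to follow from the preceding theorem (the paper states it without proof), and your computation of the coefficients of $g(x)=\frac{1}{a_0}f(a_0x)$, namely $b_i=a_ia_0^{i-1}$, so $b_0=1$, $b_1=a_1$, $b_2=a_2a_0$, together with the substitution of these into the four conditions of the theorem, is the right argument. Two remarks, however. First, your final paragraph addresses a non-issue: since $A_0'$ and $A_1'$ are \emph{defined} with the exact powers $a_0^{i-1}$, the identifications $A_0(g)=A_0'$ and $A_1(g)=A_1'$ are identities in $\ZZ_2$, not congruences, so no reduction of the $a_0$-powers modulo $4$ or $8$ is required; moreover the ``factoring out'' you describe ($a_0^{i-1}\equiv a_0$ for $i$ odd, $\equiv 1$ for $i$ even) is not what the stated conditions record, so that step should simply be deleted.

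Second, and more substantively, your ``if'' direction is incomplete as written: to define $g$ and invoke Remark \ref{a0} you need $a_0$ to be a unit in $\ZZ_2$, but you only derived $a_0\not\equiv 0\ [2]$ from the assumption that $f$ is minimal. The four displayed conditions do not by themselves force $a_0$ to be odd: for instance $f(x)=x+2$ has $a_1=1$, $A_0'=0$, $A_1'=1$, $a_2=0$, so all four conditions hold, yet $f$ is not minimal since $f_{/1}$ is the identity on $\ZZ/2\ZZ$. Thus the hypothesis $a_0\not\equiv 0\ [2]$ must either be added or be understood as implicit (the corollary is written in the setting of Remark \ref{a0}, where this restriction is in force); once you state that assumption explicitly, both directions of your argument go through and coincide with the intended proof.
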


\subsection{Characterization for $p=3$}

Let $f(x) = a_d x^d +a_{d-1}x^{d-1} +\cdots + a_1x +1$ be a polynomial of degree $d$. 
We set 

$$
\begin{array}{ll}
A_0 = \sum_{i\in 2\ZZ, i\not =0} a_i, & A_1 = \sum_{i\in 1+2\ZZ} a_i , \\
D_0 = \sum_{i\in 2\ZZ, i\not =0} ia_i, & D_1 = \sum_{i\in 1+2\ZZ} ia_i .
\end{array}
$$

\begin{lem}
\label{lemme-z3z}
The dynamical system $(\ZZ / 3\ZZ , f_{/1})$ is minimal if and only if 

$$
A_0 \in 3\ZZ_3 \hbox{ and } A_1 \in 1+3\ZZ_3 .
$$
\end{lem}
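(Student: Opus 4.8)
The plan is to use directly that a self-map of the three-element set $\ZZ/3\ZZ$ is minimal exactly when it is a $3$-cycle, together with the normalization $f(0)=1$ that is built into the hypothesis $f(x)=a_dx^d+\cdots+a_1x+1$.

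First I would observe that, since $f_{/1}(0)=1$, the orbit of $0$ under $f_{/1}$ begins $0,1,\dots$; for this orbit to exhaust $\ZZ/3\ZZ$ (equivalently, for $(\ZZ/3\ZZ,f_{/1})$ to be minimal) one is forced to have $f_{/1}(1)=2$ and then $f_{/1}(2)=0$. Hence minimality of $(\ZZ/3\ZZ,f_{/1})$ is equivalent to the conjunction
$$f(1)\equiv 2\ [3]\qquad\hbox{and}\qquad f(2)\equiv 0\ [3].$$

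Next I would evaluate these two quantities modulo $3$ in terms of $A_0$ and $A_1$. On the one hand $f(1)=1+\sum_{i\geq 1}a_i=1+A_0+A_1$. On the other hand, since $2\equiv -1\ [3]$ we have $2^i\equiv(-1)^i\ [3]$, so that $f(2)=1+\sum_{i\geq 1}a_i2^i\equiv 1+A_0-A_1\ [3]$. The two conditions above therefore translate into the linear system $A_0+A_1\equiv 1\ [3]$ and $A_0-A_1\equiv -1\ [3]$ in the unknowns $A_0,A_1\in\ZZ/3\ZZ$.

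Finally, since $2$ is invertible modulo $3$, adding and subtracting these two congruences yields the unique solution $A_0\equiv 0\ [3]$ and $A_1\equiv 1\ [3]$; conversely, substituting $A_0\in 3\ZZ_3$ and $A_1\in 1+3\ZZ_3$ back gives $f(1)\equiv 2\ [3]$ and $f(2)\equiv 0\ [3]$, i.e.\ the full cycle $0\mapsto 1\mapsto 2\mapsto 0$, so $f_{/1}$ is minimal. I expect no genuine obstacle here: the only points requiring a little care are the remark that a full cycle through $0$ with $f(0)=1$ is completely determined, and the reduction $2\equiv -1\ [3]$ used to express $f(2)$; in particular, unlike the inductive step, this base case needs neither Lemma \ref{lemme-equiv-min} nor Lemma \ref{cycle}.
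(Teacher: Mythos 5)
Your proof is correct and follows essentially the same route as the paper: both reduce $f$ modulo $3$ via $f_{/1}(0)=1$ and $f_{/1}(x)\equiv 1+A_0+A_1x\ [3]$ for $x\not\equiv 0\ [3]$, and then impose that $f_{/1}$ be a $3$-cycle. Your version is in fact slightly more explicit than the paper's terse argument, since you spell out the forced cycle $0\mapsto 1\mapsto 2\mapsto 0$ and solve the resulting linear system for $A_0$ and $A_1$.
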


\begin{proof}
Suppose $f$ is minimal.
We remark that if $x\not =0$ then $f_{/1}(x) = 1+A_0+A_1x$ and $f_{/1}(0)=1$.
Hence $A_0+A_1x\not\equiv 0 \ [3]$ for all $x\not \equiv 0\ [3]$.
Then, $A_0\equiv 0\ [3]$ and $A_1 \equiv 1\ [3]$.
Consequently $(\ZZ /3\ZZ, f_{/1})$ is minimal.
The reciprocal is as easy to establish.
\end{proof}

\begin{lem}
\label{lem-min9}
The dynamical system $(\ZZ / 9\ZZ , f_{/2})$ is minimal if and only if $A_0 \in 3\ZZ_3 $,
$A_1 \in 1+3\ZZ_3$ and $f$ fulfils one of the following conditions:

\begin{enumerate}
\item 
$D_0 \in 3\ZZ_3 $,
$D_1 \in 2+3\ZZ_3$, $a_1 \in 1+3\ZZ_3$ and 
$A_1+5 \not \in 9\ZZ_3$;
\item
$D_0 \in 3\ZZ_3 $,
$D_1 \in 1+3\ZZ_3$, $a_1 \in 1+3\ZZ_3$ and 
$A_0+6 \not \in 9\ZZ_3$;
\item 
$D_1 \in 3\ZZ_3 $,
$D_0 \in 1+3\ZZ_3$, $a_1 \in 2+3\ZZ_3$ and 
$A_1+5\not \in 9\ZZ_3  $;
\item
$D_1 \in 3\ZZ_3 $,
$D_0 \in 2+3\ZZ_3$, $a_1 \in 2+3\ZZ_3$ and 
$A_0+6 \not \in 9\ZZ_3$;
\end{enumerate}
\end{lem}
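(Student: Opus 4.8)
The plan is to apply Lemma \ref{lemme-equiv-min} with $p=3$ and $n=1$. First note that if $(\ZZ/9\ZZ,f_{/2})$ is minimal then so is $(\ZZ/3\ZZ,f_{/1})$: by \eqref{proj-fn} with $n=1$ one has $f_{/1}^k(\varphi_1(0))=\varphi_1(f_{/2}^k(0))$, so a full cycle of $f_{/2}$ projects onto a dense orbit of $f_{/1}$. Hence, by Lemma \ref{lemme-z3z}, minimality of $(\ZZ/9\ZZ,f_{/2})$ forces $A_0\in 3\ZZ_3$ and $A_1\in 1+3\ZZ_3$; conversely these two conditions give minimality of $(\ZZ/3\ZZ,f_{/1})$ by the same lemma. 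So I may assume from now on that $A_0\in 3\ZZ_3$ and $A_1\in 1+3\ZZ_3$, and it remains to see when $(\ZZ/9\ZZ,f_{/2})$ is minimal. Since $(1)\Longleftrightarrow(2)\Longleftrightarrow(3)$ in Lemma \ref{lemme-equiv-min}, this holds if and only if the conditions hold for the single point $x=0$, i.e. $\alpha_1:=\alpha_1(0)=(f^3)'(0)\in 1+3\ZZ_3$ and $\beta_1:=\beta_1(0)=f^3(0)/3\notin 3\ZZ_3$. Under the standing hypotheses the orbit of $0$ for $f_{/1}$ is $0\to1\to2\to0$, because $f(0)=1$ and $f(1)=1+A_0+A_1\equiv2\ [3]$.

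Next I would pin down $\alpha_1$ modulo $3$. By the chain rule $(f^3)'(0)=f'(f^2(0))\,f'(f(0))\,f'(0)$, and since $f^2(0)\equiv2$, $f(0)=1$ modulo $3$ and $f'$ has integer coefficients, $\alpha_1\equiv f'(2)f'(1)f'(0)\ [3]$. Here $f'(0)=a_1$, $f'(1)=D_0+D_1$ and, using $2\equiv-1\ [3]$, $f'(2)\equiv D_1-D_0\ [3]$, so $\alpha_1\equiv a_1(D_1^2-D_0^2)\ [3]$. As squares mod $3$ lie in $\{0,1\}$, the requirement $\alpha_1\equiv1\ [3]$ forces exactly one of $D_0,D_1$ to vanish mod $3$: either $D_0\equiv0$, $D_1\not\equiv0$ and then $a_1\equiv1\ [3]$, or $D_1\equiv0$, $D_0\not\equiv0$ and then $a_1\equiv2\ [3]$. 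Splitting further according to whether the nonzero one of $D_0,D_1$ is $\equiv1$ or $\equiv2$ gives the $D_0,D_1,a_1$ part of each of the four cases $(1)$--$(4)$.

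It then remains to convert $\beta_1\notin 3\ZZ_3$, i.e. $f^3(0)\not\equiv0\ [9]$, into the stated conditions on $A_0$ and $A_1$. Writing $f^2(0)=1+A_0+A_1=2+3m$ with $3m=A_0+A_1-1\in 3\ZZ_3$, Taylor's formula gives $f^3(0)=f(2+3m)\equiv f(2)+3mf'(2)\ [9]$; expanding once more around $-1$, $f(2)=f(-1+3)\equiv f(-1)+3f'(-1)\ [9]$ with $f(-1)=1+A_0-A_1$ and $f'(-1)=D_1-D_0$, while in the term $3mf'(2)$ it suffices to use $f'(2)\equiv D_1-D_0\ [3]$. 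Substituting $A_0=3\alpha$ and $A_1=1+3\gamma$ collapses everything to
\[
f^3(0)\equiv 3\bigl[(\alpha-\gamma)+(D_1-D_0)(1+\alpha+\gamma)\bigr]\ [9],
\]
hence $\beta_1\equiv(\alpha-\gamma)+(D_1-D_0)(1+\alpha+\gamma)\ [3]$. Feeding in the residues of $D_0,D_1$ from the four cases and reducing mod $3$, one finds $\beta_1\equiv\gamma+2$ when $D_1-D_0\equiv2$ (cases $(1)$ and $(3)$) and $\beta_1\equiv2\alpha+1$ when $D_1-D_0\equiv1$ (cases $(2)$ and $(4)$). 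Thus $\beta_1\notin3\ZZ_3$ is equivalent to $\gamma\not\equiv1\ [3]$, i.e. $A_1+5\notin9\ZZ_3$, in cases $(1)$ and $(3)$, and to $\alpha\not\equiv1\ [3]$, i.e. $A_0+6\notin9\ZZ_3$, in cases $(2)$ and $(4)$, which is precisely the statement.

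Everything here is elementary; the only delicate point is the second Taylor step — keeping $f(2)$ to precision $9$ while only $f'(2)$ mod $3$ is needed — together with the mod-$3$ bookkeeping in the substitution $A_0=3\alpha$, $A_1=1+3\gamma$, where a sign slip would interchange the roles of $A_0+6$ and $A_1+5$.
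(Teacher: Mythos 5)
Your proposal is correct and follows essentially the same route as the paper: it reduces to Lemma \ref{lemme-equiv-min} with $n=1$, identifies the four $(D_0,D_1,a_1)$ cases from $(f^3)'(0)\equiv a_1(D_1^2-D_0^2)\ [3]$, and computes $f^3(0)\equiv 1+A_0-A_1+(A_0+A_1+2)(D_1-D_0)\ [9]$ (the paper obtains the same congruence via binomial expansions rather than your two-step Taylor argument), yielding identical conditions $A_1+5\not\in 9\ZZ_3$ and $A_0+6\not\in 9\ZZ_3$ in the respective cases.
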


\begin{proof}
In Lemma \ref{lemme-equiv-min} we prove $(\ZZ / 9\ZZ , f_{/2})$ is minimal 
if and only if 
\begin{equation*}
  (\ZZ /3\ZZ , f_{/1} )\ \hbox{is minimal,}\  (f^3)' (0) \in 1+3\ZZ_3\ \hbox{and}\   f^3 (0) \in 3\ZZ_3 \setminus 9\ZZ_3.
\end{equation*}

Suppose $(\ZZ /9\ZZ , f_{/2} )$ is minimal. From Lemma \ref{lemme-z3z}, $A_0 \in 3 \ZZ_3$ and $A_1\in 1+3\ZZ_3$.
We easily check that $(f^3)' (0) \equiv f'(0)f'(1)f'(2)\equiv a_1 (D_1^2 - D_0^2) \ [3]$.
Hence either $D_0\in 3\ZZ_3$ and $D_1\not\in 3\ZZ_3$ or $D_1\in 3\ZZ_3$ and $D_0\not\in 3\ZZ_3$.
Moreover we have $f(0)=1$, $f^2 (0)=1+A_0 + A_1$ and 

\begin{align*}
f^3 (0) \equiv & 1+\sum_{i=1}^da_i (1+A_0+A_1)^i \equiv  1+\sum_{i=1}^d a_i \left((1+A_1)^i + iA_0(1+A_1)^{i-1} \right) & [9] \\
\equiv &   1+\sum_{i=1}^da_i \left(  2^i +i2^{i-1}(A_1-1)+iA_0 2^{i-1} \right) &  [9] \\
\equiv & 1+\sum_{i=1}^da_i \left(  (-1)^i + 3i(-1)^{i-1} +i(-1)^{i-1}(A_0+A_1-1) \right) &  [9] \\
\equiv & 1+ A_0-A_1 +(A_0+A_1+2)(D_1-D_0) &  [9] .
\end{align*}

Consequently, $(\ZZ / 9\ZZ , f_{/2})$ is minimal 
if and only if $A_0 \in 3 \ZZ_3$, $A_1\in 1+3\ZZ_3$, $a_1 (D_1^2 - D_0^2)\in 1+3\ZZ_3$ and $1+ A_0-A_1 +(A_0+A_1+2)(D_1-D_0)\in 3\ZZ_3 \setminus 9\ZZ_3$.

Suppose $D_0 \in 3\ZZ_3$. 
Then $a_1 \in 1+3\ZZ_3$ and

\begin{align*}
f^3 (0) \equiv  \left\{ \begin{array}{lll} 3+2A_0 & [9] & \hbox{ if } D_1\in 1+3\ZZ_3,\\  5+A_1 & [9] & \hbox{ if } D_1\in 2+3\ZZ_3. \end{array} \right. 
\end{align*}

Suppose $D_1 \in 3\ZZ_3$.
Then $a_1 \in 2+3\ZZ_3$ and

\begin{align*}
f^3 (0) \equiv \left\{ \begin{array}{lll} -1-2A_1 & [9] & \hbox{ if } D_0\in 1+3\ZZ_3,\\  -6-A_0 & [9] & \hbox{ if } D_0\in 2+3\ZZ_3. \end{array} \right. 
\end{align*}
This achieves the proof.
\end{proof}

\begin{theo}
The dynamical system $(\ZZ_3 , f)$ is minimal if and only if $A_0 \in 3\ZZ_3 $,
$A_1 \in 1+3\ZZ_3$ and $f$ fulfils one of the conditions (1), (2), (3) or (4):

$$
\begin{array}{|l|l|l|l|l|l|}
\hline
           & D_0 \ [3] & D_1 \ [3] & a_1\ [3]  &                \\
\hline
\hbox{(1)} & 0 & 2 & 1 & \begin{array}{l} A_1+5 \not\equiv 0 \ [9] \\ A_1+5 \not\equiv 3a_2 +3 \sum_{j\geq 0}a_{5+6j} \ [9]\end{array} \\
\hline
\hbox{(2)} & 0 & 1 & 1 & \begin{array}{l} A_0+6 \not\equiv 0 \ [9] \\ A_0+6 \not\equiv 6a_2 +3 \sum_{j\geq 0}a_{2+6j} \ [9]\end{array} \\
\hline
\hbox{(3)} & 1 & 0 & 2 & \begin{array}{l} A_1+5 \not\equiv 0 \ [9] \\ A_1+5 \not\equiv 6a_2 +3 \sum_{j\geq 0}a_{5+6j} \ [9]\end{array}\\
\hline
\hbox{(4)} & 2 & 0 & 2 & \begin{array}{l} A_0+6 \not\equiv 0 \ [9] \\ A_0+6 \not\equiv 3a_2 +3 \sum_{j\geq 0}a_{2+6j} \ [9]\end{array}\\
\hline
\end{array}
$$
\end{theo}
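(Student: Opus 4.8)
The strategy is to run the induction engine set up in Proposition~\ref{polynomials}: since $\delta=3$ for $p=3$, the system $(\ZZ_3,f)$ is minimal if and only if $(\ZZ/27\ZZ,f_{/3})$ is minimal, and by Lemma~\ref{lemme-equiv-min} the latter is equivalent to $(\ZZ/9\ZZ,f_{/2})$ being minimal together with the two conditions
\[
(f^3)'(0)\in 1+3\ZZ_3
\quad\text{and}\quad
f^9(0)\in 3^2\ZZ_3\setminus 3^3\ZZ_3 .
\]
The first of these has already been handled inside Lemma~\ref{lem-min9} (indeed by \eqref{calcul-derivee} it reduces modulo $3$ to a condition already implied by the minimality of $(\ZZ/9\ZZ,f_{/2})$), so the entire content of the theorem beyond Lemma~\ref{lem-min9} is the single congruence $f^9(0)\not\equiv 0\ [27]$. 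So the plan is: first quote Lemma~\ref{lem-min9} to get the four mutually exclusive cases (1)--(4) together with $A_0\in 3\ZZ_3$, $A_1\in 1+3\ZZ_3$; then, working case by case, compute $f^9(0)\bmod 27$ and show that the extra clause $f^9(0)\not\equiv 0\ [27]$ is exactly the second inequality listed in each row of the table.

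\textbf{Computing $f^9(0)\bmod 27$.} Here I would use the machinery around \eqref{rel-gn}. Writing $g_2=f^9$, we have $g_2=g_1^3$ where $g_1=f^3$, and $g_1^3(0)\in p^2\Phi_1^3(0)+p^4\ZZ_3$ with (from the computation in Proposition~\ref{polynomials} for $p=3$)
\[
\Phi_1^3(0)=\beta_1\bigl(1+\alpha_1+\alpha_1^2\bigr),
\qquad
\alpha_1=(f^3)'(0),\quad \beta_1=\frac{f^3(0)}{9}.
\]
Thus $f^9(0)\equiv 9\,\beta_1\bigl(1+\alpha_1+\alpha_1^2\bigr)\ [81]$, so that $f^9(0)\not\equiv 0\ [27]$ is equivalent to $\beta_1(1+\alpha_1+\alpha_1^2)\not\equiv 0\ [3]$. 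Now $\alpha_1\in 1+3\ZZ_3$ by \eqref{calcul-derivee} (or by the already-verified derivative condition), hence $1+\alpha_1+\alpha_1^2\equiv 3\equiv 0\ [3]$ and we need the next order: write $\alpha_1=1+3u$, then $1+\alpha_1+\alpha_1^2=3+9u+9u^2\equiv 3(1+3u)\ [27]$, so one extra factor of $3$ is forced and we need $\beta_1\cdot 3(1+3u)\not\equiv 0\ [27]$, i.e.
\[
\beta_1\,\alpha_1\not\equiv 0\ [3],
\qquad\text{equivalently}\qquad
f^9(0)\equiv 27\,\beta_1\,\alpha_1\ [81]
\;\text{ is a unit times }27 .
\]
So it all comes down to computing $\beta_1=f^3(0)/9$ and $\alpha_1=(f^3)'(0)$ modulo $3^2$ (for $\beta_1$) and modulo $3$ (for $\alpha_1$) in each of the four cases, in terms of the coefficients $a_i$, and matching against the table.

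\textbf{Case analysis and coefficient bookkeeping.} In each case Lemma~\ref{lem-min9} already pins down $f^3(0)\bmod 9$ (the four formulas $3+2A_0$, $5+A_1$, $-1-2A_1$, $-6-A_0$) and the first clause $f^3(0)\not\equiv 0\ [9]$ is the first inequality in the corresponding row. The remaining work is to push $f^3(0)$ to precision $27$ — i.e.\ compute $\beta_1\bmod 3$ — and to compute $\alpha_1\bmod 3$, i.e.\ refine the earlier identity $(f^3)'(0)\equiv a_1(D_1^2-D_0^2)\ [3]$ by one digit. Concretely I would iterate $f^3(pz)$ via Taylor expansion modulo $27$ as in the $p=2$ theorem, using $f(0)=1$, $f^2(0)=1+A_0+A_1$, and the expansion of $f$ around these points, keeping track of $f''$ terms; the sums $\sum_{j\geq 0}a_{2+6j}$ and $\sum_{j\geq 0}a_{5+6j}$ appearing in the table are exactly what survives when one reduces $i\bmod 6$ inside expressions like $\sum_i i(i-1)a_i\,2^{i-2}$ modulo $9$ (since $2$ has order $6$ modulo $9$), together with the $2a_2$ term coming from the $i=2$ contribution to $f''$. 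The main obstacle is purely computational: carrying out these degree-by-degree expansions modulo $27$ cleanly in all four cases without sign errors, and verifying that the resulting congruence for $\beta_1\alpha_1\bmod 3$ collapses to precisely the stated inequality $A_1+5\not\equiv 3a_2+3\sum a_{5+6j}\ [9]$ (resp.\ its analogues). There is no new conceptual ingredient beyond Lemma~\ref{lemme-equiv-min} and the $\Phi_n$ formalism; the theorem is the explicit shadow, in coefficients, of the abstract equivalence already proved.
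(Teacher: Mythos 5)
Your overall scaffolding is the same as the paper's (Proposition \ref{polynomials} with $\delta=3$, Lemma \ref{lemme-equiv-min}, Lemma \ref{lem-min9}, and the observation that $(f^9)'(0)\in 1+3\ZZ_3$ comes for free from \eqref{calcul-derivee}), but the central computational step --- the expansion of $f^9(0)$ modulo $27$ --- is wrong, and the error is not cosmetic. First, the bookkeeping: $\beta_1=\beta_1(0)=g_1(0)/3=f^3(0)/3$, not $f^3(0)/9$, and the relation you quote, $g_{n-1}^p(0)\in p^{n-1}\Phi_{n-1}^p(0)+p^{2(n-1)}\ZZ_p$, applied to $f^9(0)=g_1^3(0)$ has $n-1=1$: it only gives $f^9(0)\in 3\Phi_1^3(0)+9\ZZ_3$, an error term of size $9$, so by itself it determines nothing modulo $27$ (let alone modulo $81$ as you claim). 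Second, and decisively, your formula for $f^9(0)$ drops the quadratic term of the Taylor expansion of $g_1=f^3$, which for $p=3$ contributes at order $p^2=9<27$. The correct expansion (the displayed identity preceding \eqref{calc-iterate}, taken with $i=p=3$; it is valid for $p=3$ since $\gamma=(f^3)''(0)/2\in\ZZ_3$) is
\[
f^9(0)\equiv 3\beta_1\bigl(1+\alpha_1+\alpha_1^2\bigr)+9\gamma\beta_1^2\bigl(\alpha_1+(1+\alpha_1)^2\bigr)\equiv 9\beta_1+9\cdot 5\,\gamma\beta_1^2\equiv 9(\beta_1-\gamma)\ \ [27],
\]
using $\alpha_1\in 1+3\ZZ_3$ and $\beta_1^2\equiv 1\ [3]$. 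Hence the extra condition beyond Lemma \ref{lem-min9} is $\beta_1\not\equiv\gamma\ [3]$, i.e.\ $2f^3(0)\not\equiv 3(f^3)''(0)\ [9]$, and it is the case-by-case evaluation of $(f^3)''(0)$ (via $f''(1)=-\sum_j a_{2+3j}$ and $f''(-1)=-\sum_j(-1)^j a_{2+3j}$) that produces the second inequality in each row of the table.

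Your reduction instead ends with the criterion $\beta_1\alpha_1\not\equiv 0\ [3]$, which is automatically satisfied once $(\ZZ/9\ZZ,f_{/2})$ is minimal (Lemma \ref{lemme-equiv-min} already gives $\beta_1\not\equiv 0\ [3]$ and $\alpha_1\in 1+3\ZZ_3$). Followed through, your plan would prove that minimality modulo $9$ implies minimality on $\ZZ_3$, contradicting the paper's own example $f(x)=1+4x+4x^3+2x^5$ in the remark after Proposition \ref{polynomials}, and it could never generate the conditions involving $a_2$, $\sum_{j}a_{5+6j}$ and $\sum_{j}a_{2+6j}$. Your closing paragraph does gesture at ``keeping track of $f''$ terms'', but these must enter through the expansion of $f^9(0)$ itself (the $\gamma\beta_1^2$ term above), not merely through a sharper computation of $\beta_1$ and $\alpha_1$; as set up, the argument has a genuine gap at exactly the point where $p=3$ differs from a routine induction step.
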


\begin{proof}
From Lemma \ref{lemme-equiv-min} and Proposition \ref{polynomials}, we know that $(\ZZ_3 , f)$ is minimal if and only if $(\ZZ / 9\ZZ , f_{/2})$
is minimal, $\left(f^9\right)'(0) \in 1+3\ZZ_3$ and $f^{9} (0) \not \in 3^3 \ZZ_3$.
From Lemma \ref{lemme-equiv-min}, as $(\ZZ / 9\ZZ , f_{/2})$ is minimal, $\left(f^3 \right)' (0) \in 1+3\ZZ_3$.
The calculus \eqref{calcul-derivee} shows that $\left(f^9\right)'(0) \in 1+3\ZZ_3$.
Hence, using Lemma \ref{lem-min9}, $(\ZZ_3 , f)$ is minimal if and only if $f$ fulfils Conditions (1), (2), (3), (4), of Lemma \ref{lem-min9}, and 
$f^{9} (0) \not \in 3^3 \ZZ_3$.
But from \eqref{calc-iterate} we see that 

$$
f^9 (0) \equiv 3^2\beta_1 +3^2\gamma \beta_1^2 5 =3^2(\beta_1 -\gamma ) \ [3^3] ,
$$

where $\beta_1 = \frac{f^3 (0)}{3}$ and $\gamma = \frac{\left( f^3 \right)'' (0)}{2}$.
Thus, $f^{9} (0) \not \in 3^3 \ZZ_3$ if and only if $\beta_1 \not\equiv \gamma \ [3]$.
That is to say 

$$
2f^3 (0) \not\equiv 3\left(f^3 \right)'' (0) \ [9].
$$

We already computed $f^3 (0)$ in the proof of Lemma \ref{lem-min9}.
Let us compute $\left( f^3 \right)'' (0)$ :

\begin{align*}
\left( f^3 \right)'' (0) \equiv & f'(f^2(0))f'(1)f''(0)+ f'(f^2(0))f''(1)f'(0)^2+ f''(f^2(0))f'(1)^2f'(0)^2 \\
\equiv& f'(1+A_0+A_1)f'(1)2a_2+ f'(1+A_0+A_1)f''(1)+ f''(1+A_0+A_1) \ [3] \\
\equiv& -f'(-1)f'(1)a_2+ f'(-1)f''(1)+ f''(-1) \ [3] \\
\equiv& -(D_1^2-D_0^2)a_2+ (D_1-D_0)f''(1)+   f''(-1) \ [3] \\
\end{align*}

Moreover, we easily obtain that 

$$
f''(1) = -\sum_j a_{2+3j} \hbox{ and } f''(-1) = -\sum_j a_{2+3j}(-1)^j . 
$$

This gives:

$$
\begin{array}{|l|l|l|}
\hline
                       & 2f^3 (0)\ [9] & 3\left(f^3 \right)'' (0)\ [9]  \\
\hline
\hbox{ (1) Lemma \ref{lem-min9}} & 2A_1+1 & 6a_2 +6 \sum_{j\geq 0}a_{5+6j} \\
\hline
\hbox{ (2) Lemma \ref{lem-min9}} & 4A_0+6 & 6a_2 +3 \sum_{j\geq 0}a_{2+6j} \\
\hline
\hbox{ (3) Lemma \ref{lem-min9}} & 2A_1+1 & 3a_2 +6 \sum_{j\geq 0}a_{5+6j} \\
\hline
\hbox{ (4) Lemma \ref{lem-min9}} & 4A_0+6 & 3a_2 +3 \sum_{j\geq 0}a_{2+6j} \\
\hline
\end{array}
$$

which gives the conditions :

$$
\begin{array}{|l|l|}
\hline
                       &  \hbox{Condition} \\
\hline
\hbox{ (1) Lemma \ref{lem-min9}} & A_1+5 \not\equiv 3a_2 +3 \sum_{j\geq 0}a_{5+6j} \ [9] \\
\hline
\hbox{ (2) Lemma \ref{lem-min9}} & A_0+6 \not\equiv 6a_2 +3 \sum_{j\geq 0}a_{2+6j} \ [9]\\
\hline
\hbox{ (3) Lemma \ref{lem-min9}} & A_1+5 \not\equiv 6a_2 +3 \sum_{j\geq 0}a_{5+6j} \ [9]\\
\hline
\hbox{ (4) Lemma \ref{lem-min9}} & A_0+6 \not\equiv 3a_2 +3 \sum_{j\geq 0}a_{2+6j} \ [9]\\
\hline
\end{array}
$$

This achieves the proof.
\end{proof}

\begin{coro}
Let $f:{\ZZ}_3\to{\ZZ}_3$ be a polynomial defined by 
$f(x)=a_dx^d+a_{d-1}x^{d-1}+\cdots +a_1x+a_0$.
The dynamical system $(\ZZ_3 , f)$ is minimal if and only if $A'_0 \in 3\ZZ_3 $,
$A'_1 \in 1+3\ZZ_3$ and $f$ fulfils one of the Conditions (1), (2), (3) or (4):

$$
\begin{array}{|l|l|l|l|l|l|}
\hline
           & D'_0 \ [3] & D'_1 \ [3] & a_1\ [3]  &                \\
\hline
\hbox{(1)} & 0 & 2 & 1 & \begin{array}{l} A'_1+5 \not\equiv 0 \ [9] \\ A'_1+5 \not\equiv 3a_2a_0 +3 \sum_{j\geq 0}a_{5+6j}a_0^{4+6j} \ [9]\end{array} \\
\hline
\hbox{(2)} & 0 & 1 & 1 & \begin{array}{l} A'_0+6 \not\equiv 0 \ [9] \\ A'_0+6 \not\equiv 6a_2a_0 +3 \sum_{j\geq 0}a_{2+6j}a_0^{1+6j} \ [9]\end{array} \\
\hline
\hbox{(3)} & 1 & 0 & 2 & \begin{array}{l} A'_1+5 \not\equiv 0 \ [9] \\ A'_1+5 \not\equiv 6a_2a_0 +3 \sum_{j\geq 0}a_{5+6j}a_0^{4+6j} \ [9]\end{array}\\
\hline
\hbox{(4)} & 2 & 0 & 2 & \begin{array}{l} A'_0+6 \not\equiv 0 \ [9] \\ A'_0+6 \not\equiv 3a_2a_0 +3 \sum_{j\geq 0}a_{2+6j}a_0^{1+6j} \ [9]\end{array}\\
\hline
\end{array}
$$

where

$$
\begin{array}{ll}
A'_0 = \sum_{i\in 2\ZZ, i\not =0} a_ia_0^{i-1}, & A'_1 = \sum_{i\in 1+2\ZZ} a_ia_0^{i-1} , \\
D'_0 = \sum_{i\in 2\ZZ, i\not =0} ia_ia_0^{i-1}, & D'_1 = \sum_{i\in 1+2\ZZ} ia_ia_0^{i-1} .
\end{array}
$$

\end{coro}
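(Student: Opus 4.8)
The plan is to reduce the Corollary to the Theorem (the case $f(0)=1$) through the affine conjugacy of Remark \ref{a0}. By Remark \ref{a0}, minimality of $(\ZZ_3,f)$ forces $a_0\not\equiv 0\ [3]$, so I would first dispose of that degenerate case and then assume $a_0$ is a unit of $\ZZ_3$. Setting $g(x)=a_0^{-1}f(a_0x)=\sum_{i=0}^d a_ia_0^{i-1}x^i$, one obtains a polynomial with integral coefficients $b_i=a_ia_0^{i-1}$, with $g(0)=1$, and with $(\ZZ_3,g)$ conjugate to $(\ZZ_3,f)$; hence $f$ is minimal if and only if $g$ is.

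Next I would apply the Theorem to $g$ and translate each quantity occurring there back into the coefficients of $f$. Because $i\mapsto i$ preserves parity, the even/odd index sums attached to $g$ are
\[
\sum_{i\in 2\ZZ,\,i\neq 0}b_i=A_0',\quad \sum_{i\in 1+2\ZZ}b_i=A_1',\quad \sum_{i\in 2\ZZ,\,i\neq 0}ib_i=D_0',\quad \sum_{i\in 1+2\ZZ}ib_i=D_1';
\]
moreover $b_1=a_1$, $b_2=a_2a_0$, and, since $i=5+6j$ gives $i-1=4+6j$ while $i=2+6j$ gives $i-1=1+6j$,
\[
\sum_{j\geq 0}b_{5+6j}=\sum_{j\geq 0}a_{5+6j}a_0^{4+6j},\qquad \sum_{j\geq 0}b_{2+6j}=\sum_{j\geq 0}a_{2+6j}a_0^{1+6j}.
\]
Substituting these identities into the Theorem applied to $g$ should turn its hypotheses --- $A_0\in 3\ZZ_3$, $A_1\in 1+3\ZZ_3$, together with one of the four congruence blocks in $D_0,D_1,a_1,A_0,A_1,a_2$ and the two auxiliary coefficient sums --- into exactly the primed conditions displayed in the Corollary, which yields the equivalence.

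The argument has no genuinely hard step; its whole content is the clerical verification that $A_0,A_1,D_0,D_1$ and the auxiliary sums behave correctly under the substitution $a_i\mapsto b_i=a_ia_0^{i-1}$. The one place I expect to need care is the bookkeeping of the powers of $a_0$ in the congruences modulo $9$: namely that the weight attached to $a_i$ in $\sum a_{5+6j}$ and $\sum a_{2+6j}$ is $a_0^{i-1}$ with $i$ congruent to $5$, respectively $2$, modulo $6$, matching the exponents $4+6j$ and $1+6j$ in the table. Assembling these observations with the conjugacy of Remark \ref{a0} then finishes the proof.
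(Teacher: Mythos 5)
Your route is exactly the one the paper intends: the corollary is stated there without proof, as the immediate combination of Remark \ref{a0} with the theorem for polynomials normalized by $f(0)=1$, and your bookkeeping of the substitution $a_i\mapsto b_i=a_ia_0^{i-1}$ is correct ($b_1=a_1$, $b_2=a_2a_0$, the even/odd sums become $A'_0,A'_1,D'_0,D'_1$, and the sums over $i\equiv 5$ and $i\equiv 2\ [6]$ pick up the weights $a_0^{4+6j}$ and $a_0^{1+6j}$), as is the observation that $x\mapsto a_0x$ conjugates $g(x)=a_0^{-1}f(a_0x)$ to $f$ when $a_0$ is a unit.

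The one step that does not go through as written is the claim that the case $a_0\equiv 0\ [3]$ can simply be ``disposed of''. For the direction ``minimal $\Rightarrow$ conditions'' it can, since Remark \ref{a0} rules that case out; but for the converse you would need the displayed conditions to fail whenever $a_0\equiv 0\ [3]$, and they need not: for $f(x)=x+3$ one has $A'_0=D'_0=0$, $A'_1=D'_1=a_1=1$, so the preliminary conditions and row (2) are satisfied ($A'_0+6\equiv 6\not\equiv 0\ [9]$ and $6\not\equiv 6a_2a_0+3\sum_{j\geq 0}a_{2+6j}a_0^{1+6j}=0\ [9]$), yet $f$ is not minimal, its orbit of $0$ being contained in $3\ZZ_3$. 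The point is that when $a_0$ is not a unit, $x\mapsto a_0x$ is no longer a homeomorphism of $\ZZ_3$, so minimality of $g$ does not transfer to $f$. Thus the equivalence silently requires $a_0\not\equiv 0\ [3]$ (equivalently, $a_0$ invertible in $\ZZ_3$); the paper's statement carries the same implicit hypothesis. To complete your argument you should add this hypothesis (or restrict the ``if'' direction to such $a_0$), after which the conjugacy argument you describe does finish the proof.
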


\begin{coro}\label{cndegre5z3}
Let $f:{\ZZ}_3\to{\ZZ}_3$ be a polynomial defined by 
$f(x)=a_5x^5+a_4x^4+a_3x^3+a_2x^2+a_1x+1$.
Then, $(\ZZ_3 , f)$ is minimal if and only $f$ satisfies one of the following conditions

$$
\begin{array}{|l|l|l|l|l|l|l|}
\hline
           & a_1 \ [3] & a_2 \ [3] & a_3\ [3]  & a_4\ [3] & a_5\ [3] &        \\
\hline
\hbox{(1)} & 1 & 0 & 1 & 0 & 2 & a_1+a_3 +a_5 \equiv 7 \ [9]  \\
\hline
\hbox{(2)} & 1 & 0 & 0 & 0 & 0 & a_2 +a_4 \equiv 0 \hbox{ or } 6 \ [9] \\
\hline
\hbox{(3)} & 2 & 1 & 0 & 2 & 2 & a_1+a_3 +a_5 \equiv 7 \ [9]\\
\hline
\hbox{(4)} & 2 & 2 & 0 & 1 & 2 & a_2 + a_4 \equiv 0 \ [9]\\
\hline
\end{array}
$$

\end{coro}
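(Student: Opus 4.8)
The plan is to obtain Corollary~\ref{cndegre5z3} by specializing the characterization theorem for $\ZZ_3$ stated above to a polynomial of degree at most $5$ with $a_0=1$, and reducing each congruence it involves to a congruence on the residues of $a_1,\dots,a_5$. First I would record the four auxiliary quantities for $f(x)=a_5x^5+a_4x^4+a_3x^3+a_2x^2+a_1x+1$: here $A_0=a_2+a_4$, $A_1=a_1+a_3+a_5$, $D_0=2a_2+4a_4\equiv 2a_2+a_4\ [3]$ and $D_1=a_1+3a_3+5a_5\equiv a_1+2a_5\ [3]$. The theorem then asserts that $(\ZZ_3,f)$ is minimal if and only if $A_0\in 3\ZZ_3$, $A_1\in 1+3\ZZ_3$, and one of four columns of congruences on $(D_0,D_1,a_1)$ modulo $3$ holds, together with the associated pair of non-congruences modulo $9$.

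Second, for each of the four cases I would solve the resulting linear system modulo $3$. The relations $a_2+a_4\equiv 0$ and $2a_2+a_4\equiv D_0\ [3]$ determine $a_2$ and $a_4$ (subtracting gives $a_2\equiv D_0$, hence $a_4\equiv -D_0$); then $a_1$ is prescribed, $a_1+2a_5\equiv D_1\ [3]$ gives $a_5$, and $a_1+a_3+a_5\equiv 1\ [3]$ gives $a_3$. This reproduces exactly the four rows of residues $(a_1,\dots,a_5)\bmod 3$ in the table, and shows that no other residue pattern can give a minimal polynomial.

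Third, I would reduce the two mod-$9$ non-congruences attached to each case. The key simplification is that, for a polynomial of degree at most $5$, the sums $\sum_{j\geq 0}a_{5+6j}$ and $\sum_{j\geq 0}a_{2+6j}$ collapse to $a_5$ and $a_2$, while a term such as $3a_2$, $6a_2$ or $3a_5$ modulo $9$ depends only on the residue of $a_2$, resp.\ $a_5$, modulo $3$, which was fixed in the previous step. After substitution the first non-congruence becomes $A_1+5\not\equiv 0\ [9]$ (or $A_0+6\not\equiv 0\ [9]$) and the second becomes $A_1+5\not\equiv c\ [9]$ (or $A_0+6\not\equiv c\ [9]$) for an explicit residue $c$; combined with $A_1\equiv 1\ [3]$ (resp.\ $A_0\equiv 0\ [3]$), which leaves only three residues modulo $9$, this pins $A_1$ (resp.\ $A_0$) down to a single residue modulo $9$, except in the case where $c\equiv 0\ [9]$ and the second non-congruence is redundant, so that two residues survive. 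Substituting $A_1=a_1+a_3+a_5$ and $A_0=a_2+a_4$ yields the last column of the table.

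The argument is essentially bookkeeping; no new idea beyond the theorem and Lemma~\ref{lem-min9} is required. The one place where care is needed is the mod-$9$ reduction: one must keep straight which of $3a_2$ or $6a_2$ occurs in each of the four cases, use the value modulo $3$ of the relevant coefficient established in the second step, and then check case by case whether the two non-congruences give one exclusion — hence two surviving residues for $A_0$ — or two exclusions, hence a unique residue for $A_1$ or $A_0$.
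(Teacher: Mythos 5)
Your method is the intended one (the paper offers no separate proof: the corollary is exactly the specialization of the theorem to degree $5$ with $a_0=1$), and most of your outline is sound: $A_0=a_2+a_4$, $A_1=a_1+a_3+a_5$, $D_0\equiv 2a_2+a_4\ [3]$, $D_1\equiv a_1+2a_5\ [3]$; solving the mod-$3$ system does reproduce the four residue rows; $\sum_{j\geq 0}a_{5+6j}=a_5$ and $\sum_{j\geq 0}a_{2+6j}=a_2$ in degree $5$; and $3a_i,6a_i$ modulo $9$ are determined by $a_i$ modulo $3$. Your handling of case (2) (where $6a_2+3a_2=9a_2\equiv 0\ [9]$, so the two exclusions coincide and both residues $0$ and $6$ survive for $A_0$) and of cases (1) and (4) is correct.

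The step you leave unexecuted, however — ``substituting $A_1$ and $A_0$ yields the last column of the table'' — fails in case (3), which is precisely where the care you mention is needed. There $a_2\equiv 1$ and $a_5\equiv 2\ [3]$, so $6a_2+3a_5\equiv 6+6\equiv 3\ [9]$; the two exclusions are $A_1\not\equiv 4\ [9]$ and $A_1\not\equiv 7\ [9]$, and since $A_1\equiv 1\ [3]$ the unique surviving residue is $A_1\equiv 1\ [9]$, not $7$ as printed in the statement (in case (1) one has $3a_2+3a_5\equiv 0+6\equiv 6\ [9]$, excluding $1$ and $4$, so $7$ is correct there). A direct check confirms this: $f(x)=5x^5+2x^4+x^2+2x+1$ satisfies all of row (3) with $a_1+a_3+a_5=7$, yet the orbit of $0$ modulo $27$ is $0,1,11,3,16,26,24,4,14,0$, a cycle of length $9$, so $(\ZZ_3,f)$ is not minimal; whereas $f(x)=8x^5+2x^4+x^2+2x+1$, with $a_1+a_3+a_5=10\equiv 1\ [9]$, has $f^9(0)\equiv 9\ [27]$ and is minimal. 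So carried out faithfully, your computation does \emph{not} reproduce the printed table: row (3) must read $a_1+a_3+a_5\equiv 1\ [9]$. You should either correct the statement accordingly or retract the claim that the substitution yields the table as it stands; as written, the proposal cannot be completed verbatim.
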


\subsection{Discussion}
We had planned to address the question of minimality of polynomial dynamics in $\ZZ_p$ for any prime $p$. We have no idea if this is tractable or not to get a
general condition for all prime numbers $p>3$. We have obtained sufficient conditions for any $p$, but they are far from necessary, as we have seen 
performing simulations. In some sense, it is not surprising since giving conditions on the coefficients to ensure bijectivity of polynomials is already known to be 
a difficult problem \cite{LN}. 

\section*{Acknowledgement}
We would like to thank the mathematical department of Vaxj\"o University, Sweden, where part of this
work has been initiated, for its hospitality. We are grateful to Prof. Anashin, Prof. Fan and Prof. Khrennikov for fruitful discussions


\end{document}